\newtheorem{thm}{Theorem}[section]
\newtheorem{exm}[thm]{Example}
\newtheorem{lem}[thm]{Lemma}
\newtheorem{prop}[thm]{Proposition}
\newtheorem{cor}[thm]{Corollary}
\newtheorem{conj}[thm]{Conjecture}
\newcommand{\abs}[1]{\left\vert#1\right\vert}
\newcommand{\norm}[1]{\parallel\! #1\! \parallel}
\newcommand{\seq}[1]{\left<#1\right>}
\newcommand{\set}[1]{\left\{#1\right\}}
\newcommand{\diag}{\textit{diag}}
\newcommand{\perm}{\texttt{perm}}
\newcommand{\rank}{\texttt{rank}}
\newcommand{\spann}{\texttt{span}}
\newcommand{\al}{\alpha}
\newcommand{\be}{\beta}
\newcommand{\diams}{\diamondsuit}
\newcommand{\ga}{\gamma}
\newcommand{\Ga}{\Gamma}
\newcommand{\la}{\lambda}
\newcommand{\si}{\sigma}
\newcommand{\A}{\mathcal{A}}
\newcommand{\B}{\mathcal{B}}
\newcommand{\C}{\mathcal{C}}
\newcommand{\R}{\textbf{R}}
\newcommand{\M}{\mathcal{M}}
\newcommand{\caE}{\mathcal{E}}
\newcommand{\caO}{\mathcal{O}}
\newcommand{\caS}{\mathcal{S}}
\newcommand{\cP}{\mathcal{P}}
\newcommand{\caQ}{\mathcal{Q}}
\newcommand{\K}{\mathcal{K}}
\newcommand{\caI}{\mathcal{I}}
\newcommand{\caL}{\mathcal{L}}
\newcommand{\st}{\mathcal{S}}
\newcommand{\T}{\mathcal{T}}
\newcommand{\bu}{\textbf{u}}
\newcommand{\bv}{\textbf{v}}
\newcommand{\bw}{\textbf{w}}
\newcommand{\bx}{\textbf{x}}
\newcommand{\by}{\textbf{y}}
\newcommand{\bU}{\textbf{U}}
\newcommand{\bV}{\textbf{V}}
\newcommand{\bfe}{\textbf{e}}
\def\RR{\mathbb R}
\def\CC{{\mathbb C}}
\newcommand{\mnrts}{$m$th order $n$-dimensional real tensors\ }
\newcommand{\mnsts}{$m$th order $n$-dimensional symmetric tensors\ }
\newcommand{\beq}{\begin{equation}}
\newcommand{\eeq}{\end{equation}}
\newcommand{\bey}{\begin{eqnarray}}
\newcommand{\eey}{\end{eqnarray}}
\newcommand{\beyy}{\begin{eqnarray*}}
\newcommand{\eeyy}{\end{eqnarray*}}
\title{Separable symmetric tensors and separable anti-symmetric tensors}
\author{Changqing Xu\thanks{Email: cqxurichard@mail.usts.edu.cn}\\
School of Mathematical Sciences\\
Suzhou University of Science and Technology, Suzhou, China}
      \def\@setcopyright{}
      \def\serieslogo@{}
 \date{\today}
\begin{document}
\maketitle

\begin{abstract}
In this paper, we first introduce the invertibility of even-order tensors and the separable tensors, including separable symmetry tensors and separable anti-symmetry tensors,
defined respectively as the sum and the algebraic sum of rank-1 tensors generated by the tensor product of some vectors, say, $\bv_{1},\bv_{2}, \ldots, \bv_{m}$. 
We show that the $m!$ sumrands, each in form $\bv_{\si(1)}\times \bv_{\si(2)}\times\ldots\times \bv_{\si(m)}$, are linearly independent if $\bv_{1},\bv_{2}, \ldots, \bv_{m}$ are 
linearly independent, where $\si$ is any permutation on $\set{1,2,\ldots,m}$. We offer a class of tensors to achieve the upper bound for $\rank(\A) \le 6$ for all $\A\in \RR^{3\times 3\times 3}$ ( see e.g. \cite{LPZ2013}).  We also show that each $3\times 3\times 3$ anti-symmetric tensor is separable.   
\end{abstract}

\noindent \textbf{keywords:} \  S-product; invertible tensor; separable symmetric tensor; separable anti-symmetric tensor. \\
\noindent \textbf {AMS Subject Classification}: \   53A45, 15A69.  \\


\section{Introduction}
\setcounter{equation}{0}

A tensor is a multi-array or hyper-matrix with multiple index, which can be applied in many fields related to multi-dimensional dimensional data (MDD) such as theory of relativity \cite{WS1972}, elasticity \cite{QZB2016} and magnetics , and computer vision \cite{SH2005, SH2006}. Recently Wang, Gu, Lee and Zhang \cite{WGLZ2021} proposed a quantum algorithm for recommendation systems by using third-order tensors. \\
\indent  A tensor can be regarded as a natural extension of a matrix which is fundamental in many fields. Some basic terminology e.g.  rank, determinant, the eigenvalues and 
the inverse have been introduced and investigated in tensor theory. For example, there are several different kind of definitions for the rank of a tensor, say, the marginal rank, meaning the maximal rank of the matricizations in different modes. There are also several kind of eigenvalues (and related eigenvectors) such as the H-eigenvalues, the Z-eigenvalues and the E-eigenvalues (see e.g. \cite{qiluo2017}).  The definition of the determinant of a hypercubic tensor, that is, a tensor with the same dimension in all modes, is much more complicate than that of a square matrix and few known work can be found relevant to this topic.\\
\indent  The rank of a tensor was proposed in 1927 by Hitchcock \cite{Hitch192701,Hitch192702} following the introduction of the polyadic form of tensors, i.e., the rank-one 
decomposition of a tensor. In 1944 Cattell \cite{Cat1944,Cat1952} presented the parallel proportional analysis and offered the idea of multiple axes for analysis, which became 
popular in 1970 when it appeared in the psychometrics community in the form of CANDECOMP (canonical decomposition) by Carroll and Chang \cite{CC1970} and PARAFAC 
by Harshman \cite{Harsh1970}, which is also referred as the CP decomposition.  A CP decomposition of a tensor $\A\in \RR^{3\times 3\times 3}$ is in form 
\beq\label{eq: cpdecomp01} 
\A=\sum\limits_{j=1}^{R} \al_{j}\times \be_{j}\times \ga_{j} 
\eeq
where $\al_{j}, \be_{j}, \ga_{j}\in \RR^{3}$ are nonzero vectors. The smallest positive integer $R$ for (\ref{eq: cpdecomp01}) is called the \emph{CP rank}, or simply the rank of 
$\A$ denoted by $\rank(\A)$.  As Kolda pointed out there are several differences between matrix rank and tensor rank,  one of which is that the rank of a real-valued tensor 
may be different over $\RR$ and $\CC$, and a significant one is that the rank-1 decomposition of a tensor is usually unique while that of a matrix is generally not. As is 
mentioned by Kolda and others, the rank of a tensor can be larger than the maximal dimensionality. For example, the rank of an $2\times 2\times 2$ tensor can be either 
2 or 3; and for an $3\times 3\times 3$ tensor $\A$, Lavrauw, Pavan and Zanella \cite{LPZ2013} shows that $\rank(\A)$ can take value in $[6]$ if $\A\neq \caO$. 
Our result on separable tensors presents a class of $3\times 3\times 3$ tensors which satisfy $\rank(\A)=6$. \\
\indent  For our convenience, we denote by $[n]$ the set $\set{1,2,\ldots, n}$ for any positive integer $n$ and $\RR$ the field of real numbers.  A tensor $\A$ of size 
$\rm{I}:=d_1\times d_2\times \ldots \times d_m$ is an \emph{$m$-array}.  $\A$ is called a \emph{hypercubic} if all its modes are of same length, i.e., 
$d_1=\ldots =d_m=[n]$.  All the tensors involved, if not otherwise mentioned, are hypercubical.  Denote by $\T_{m;n}$ the set of \mnrts and $\T_{m}$ the set of $m$-order 
tensors.  An $m$-order tensor $\A\in \T_{m;n}$ is called \emph{symmetric} if each entry $A_{i_{1}i_{2}\ldots i_{m}}$ is invariant under any permutation on its indices. Denote 
by $\st_{m;n}$ the set of \mnsts.  Let $\A\in \st_{m;n}$.  Then $\A$ is associated with an $m$-order $n$-variate homogeneous polynomial    
\beq\label{def: asspolyn}
f_{\A}(\bx):=\A\bx^m=\sum\limits_{i_1,i_2,\ldots,i_m} A_{i_1i_2\ldots i_m}x_{i_1}x_{i_2}\ldots x_{i_m}
\eeq
A symmetric tensor $\A\in \st_{m;n}$ is called a \emph{positive definite} tensor if  
\beq\label{def: psd-t}
 f_{\A}(\bx)>0,  \quad \forall 0\neq \bx\in \RR^n,
\eeq 
and is called a \emph{positive semidefinite} or psd tensor if  $f_{\A}(\bx)\ge 0$ for all $\bx\in \RR^n$. 
It is easy to see that an odd-order psd tensor is the zero tensor.  Thus we may assume in the following that $m$ is an even number, if not mentioned otherwise.  For further study on positive (semi-)definite tensors we refer the reader to \cite{CS2013, cglm2008, qieig2005,qicop2013}. \\
\indent  Let $m=2k$ ($k\in \set{1,2,3,\ldots}$).  The \emph{identity tensor} $\caI=(\delta_{\si})\in \T_{m;n}$ is defined as 
\beq\label{eq:defidentensor} 
\delta_{i_{1}i_{2}\ldots i_{k}j_{1}j_{2}\ldots j_{k}} = \delta_{i_{1}j_{1}}\delta_{i_{2}j_{2}}\ldots \delta_{i_{k}j_{k}} 
\eeq
for $\si:=(i_{1},i_{2},\ldots,i_{m})\in S(m,n)$, where $m=2k, \delta_{ij}\in \set{0,1}$, and $\delta=1$ if $i=j$. We will present some basic properties of the identity tensors in 
the next section. \\
\indent  The tensor multiplication can be defined differently with specifying the dimensions to be multiplied and the order of the dimensions of the resulted tensor. 
For example, by fixing a specific mode, say $k\in [m]$, the $k$-mode product of a tensor $\A\in \T_{m;n}$ with a matrix $M\in \RR^{n\times n}$ is defined as 
\beq\label{eq: nmodeproduct}  
(\A\times_{k} M)_{i_{1}i_{2}\ldots i_{m}} = \sum\limits_{j=1}^{n} A_{i_{1}\ldots i_{k-1}j i_{k+1}\ldots i_{m}} m_{j i_{k}}  
\eeq
the resulted tensor, denoted by $\A\times_{k} M\in \T_{m;n}$, remain the same size as that of $\A$. But this definition applies to any tensor $\A$ of any size. When 
$A\in \RR^{n\times n},B\in \RR^{n\times n}$, we have $A\times_{1} B=B^{\top} A,  A\times_{2} B=AB^{\top}$.  Note the difference between (\ref{eq: nmodeproduct}) 
and the one in \cite{K2009, K2015}.   The definition (\ref{eq: nmodeproduct}) is also meaningful when $M=\bv\in \RR^{n}$ is a vector, in which case the resulted tensor 
is of order $m-1$, and the product is called a \emph{contractive $n$-mode product} of $\A$ with vector $\bv$. \\
\indent  Another interesting tensor product is the t-product defined between two third order tensors.  Let 
\[ \A\in \RR^{n_{1}\times n_{2}\times n_{3}}, \B\in \RR^{n_{2}\times n\times n_{3}}. \]
The t-product tensor of $\A$ and $\B$, denoted $\C:=\A\ast \B$, is an $n_{1}\times n\times n_{3}$ tensor defined by
\beq\label{tprod}
C_{i_{1}i_{2}i_{3}} = \sum\limits_{j_{1},j_{2}} A_{i_{1}j_{1} j_{2}} B_{j i_{2}(i_{3}+1-j_{2})} 
\eeq
For more properties on t-product of tensors, we refer to \cite{KM2011}. \\       
\indent Let $S_{1}=\set{s_{1},s_{2},\ldots, s_{p}}, S_{2}=\set{t_{1}, t_{2},\ldots, t_{q}}$ be two subsets of $[p+q]$ with $S_{1}\cup S_{2}=[p+q]$, where $p,q$ are positive integers.  The \emph{S-product} of tensors $\A\in \T_{p;n}$ and $\B\in \T_{q;n}$, denoted $\A\boxtimes_{S} \B$, is defined by 
\beq\label{eq: S-product}
(\A\boxtimes_{S} \B)_{i_{1}i_{2}\ldots i_{m}} = \sum\limits_{k_{1},k_{2},\ldots, k_{r}}  A_{i_{1}i_{2}\ldots i_{p}}B_{j_{1}j_{2}\ldots j_{q}}
\eeq
where $m=p+q-2r,  r=\abs{S_{1}\cap S_{2}}$, and 
\[ \set{i_{1},i_{2},\ldots, i_{p}}\subseteq S_{1}, \qquad  \set{ j_{1}, j_{2},\ldots, j_{q}}\subseteq S_{2}, \]
and the sum in the right hand side of (\ref{eq: S-product}) is taken over all subscripts in $S:=\set{k_{1},k_{2},\ldots, k_{r}} = S_{1}\cap S_{2}$.  
The S-product can be contractive or extensive. In fact, there are two cases for the S-product: 
\begin{description}
\item[(a)]  $r>0$($1\le r\le \min\set{p, q}$), i.e., $S=S_{1}\cap S_{2}$ is nonempty. $\A\boxtimes_{S}\B$ is a tensor of order $p+q-2r$ by (\ref{eq: S-product}).    
\item[(b)]  $r=0$, i.e., there is no intersection between $S_{1}$ and $S_{2}$.  Then $S_{1}\cup S_{2}=[p+q]$, $\A\boxtimes_{S}\B$ is a tensor of order $p+q$. 
\end{description}
If $S_{2}\subseteq S_{1}$, then $S=S_{2}$, $\A\boxtimes_{S} \B$ is a contractive product along mode-$S_{2}$, yielding a tensor $\A\B:=\A\boxtimes_{S} \B$ of order $m=p-q$.  
A special case is the inner product $\A\B=\seq{\A, \B}$ when $p=q$ or equivalently $S_{1}=S_{2}$. On the other hand, the S-product of tensors in case (b) is exactly the outer (extensive) product of tensors. \\
\indent  The S-product can be recursively employed to yield a higher order or a lower order tensor, depending on what we need.  An extreme case is a rank-1 symmetric tensor 
$\bx^{m}$ generated from the power of a vector $\bx$ in the sense of S-product of $\bx$.  Thus the S-product unifies all possible multiplications of tensors, including the familiar contractive and the outer(tensor) product of tensors. \\
\indent In the following sections, we present some basic properties on the identity tensors. Also introduced are the separable symmetric tensors and the separable anti-symmetric tensors.  

\vskip 10pt

\section{Invertibility of a hypercubic tensor with an even-order}
\setcounter{equation}{0}

We first present some basic properties on the identity tensors.  Recall that the identity matrix $I_{n}$ in matrix space $\M_{n}$ obeys the following rule: 
\beq\label{eq: identity00}
I_{n} A = AI_{n} =A, \quad  \forall A\in \M_{n}.
\eeq
This is also valid for the identity tensor in tensor space $\T_{m;n}$. 
\begin{lem}\label{le:le01}
For any even number $m=2k$, the identity tensor $\caI$ defined by (\ref{eq:defidentensor} ) in tensor space $\T_{m; n}$ obeys the following rule: 
\beq\label{eq: identt01}
\caI \A = \A \caI =\A, \quad  \forall \A\in \T_{m;n}.
\eeq
\end{lem}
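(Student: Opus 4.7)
The plan is to recognize the product $\caI\A$ (and symmetrically $\A\caI$) as a contractive $k$-mode S-product along the last $k$ indices of $\caI$ and the first $k$ indices of $\A$, which is the natural generalization of matrix multiplication $I_n A = A$. Since $\caI\in \T_{m;n}$ and $\A\in \T_{m;n}$ both have order $m=2k$, when we take $S_1=\{k{+}1,\ldots,2k\}$ and $S_2=\{1,\ldots,k\}$ with $S_2\subseteq S_1$ in the notation of the S-product, the resulting tensor has order $m+m-2k=m$, so the output is again an element of $\T_{m;n}$ as required for the statement to make sense.

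With that setup fixed, the first step is to expand $(\caI\A)_{i_1\ldots i_k l_1\ldots l_k}$ directly from the definition of the S-product, obtaining
\[
(\caI\A)_{i_1\ldots i_k l_1\ldots l_k} \;=\; \sum_{j_1,\ldots,j_k=1}^{n} \caI_{i_1\ldots i_k j_1\ldots j_k}\, A_{j_1\ldots j_k l_1\ldots l_k}.
\]
Next I would substitute the defining formula (\ref{eq:defidentensor}) for $\caI$, replacing $\caI_{i_1\ldots i_k j_1\ldots j_k}$ by $\delta_{i_1 j_1}\delta_{i_2 j_2}\cdots\delta_{i_k j_k}$. Then collapse the $k$ summations one at a time using the elementary identity $\sum_{j}\delta_{ij}a_{j}=a_{i}$ (applied, say, in the order $j_1,j_2,\ldots,j_k$). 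Each sum eliminates one Kronecker delta and forces the corresponding dummy index to equal the matching free index, so after $k$ steps the right-hand side reduces to $A_{i_1\ldots i_k l_1\ldots l_k}$, which proves $\caI\A=\A$.

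The identity $\A\caI=\A$ is established by the exact same computation, but now contracting the last $k$ indices of $\A$ against the first $k$ indices of $\caI$, so that $(\A\caI)_{i_1\ldots i_k l_1\ldots l_k}=\sum_{j_1,\ldots,j_k} A_{i_1\ldots i_k j_1\ldots j_k}\delta_{j_1 l_1}\cdots\delta_{j_k l_k} = A_{i_1\ldots i_k l_1\ldots l_k}$. There is no genuine obstacle in the argument; the only thing that needs care is making the implicit contraction convention explicit, i.e.\ fixing which $k$ indices of $\caI$ are identified with which $k$ indices of $\A$ so that the product is well-defined in the sense of (\ref{eq: S-product}). Once that convention is in place, the proof is essentially a symbolic manipulation of Kronecker deltas, entirely parallel to the matrix case (\ref{eq: identity00}).
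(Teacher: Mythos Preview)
Your proposal is correct and follows essentially the same route as the paper's proof: expand the contractive product as a $k$-fold sum, replace the identity tensor's entries by the product of Kronecker deltas from (\ref{eq:defidentensor}), and collapse the sums one index at a time. The only cosmetic differences are that the paper computes $\A\caI$ first (you start with $\caI\A$) and that you make the underlying S-product convention explicit, which the paper leaves implicit.
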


\begin{proof}
For any $\si\equiv (i_{1},i_{2},\ldots, i_{m})\in S(m,n)$, we have 
\beyy
  (\A \caI)_{i_{1}i_{2}\ldots i_{m}} &=&\sum\limits_{j_{1},j_{2},\cdots,j_{k}} A_{i_{1}i_{2}\ldots i_{k} j_{1}j_{2}\ldots j_{k}} \delta_{j_{1}j_{2}\ldots j_{k} i_{k+1}i_{k+2}\ldots i_{m}}\\
                                       &=&\sum\limits_{j_{1},j_{2},\cdots,j_{k}} A_{i_{1}i_{2}\ldots i_{k} j_{1}j_{2}\ldots j_{k}} \delta_{j_{1}i_{k+1}} \delta_{j_{2}i_{k+2}} \ldots \delta_{j_{k}i_{m}}\\
                                       &=& A_{i_{1}i_{2}\ldots i_{k} i_{k+1}i_{k+2}\ldots i_{m}} 
\eeyy
Thus $\A\caI =\A$.  Similarly we can prove $\caI \A=\A$. 
\end{proof}
\indent The identity tensor $\caI\in \T_{m;n}$ can be regarded as the S-power of the identity matrix $I_{n}$ in the sense of the outer-product: 
\[  \caI = I_{n}\boxtimes_{S_{2}} I_{n}\boxtimes_{S_{3}} \ldots \boxtimes_{S_{k}} I_{n} \]
where $S_{i}\equiv \set{i,k+i}$ ($i=1,2,\ldots,k$).  Therefore we can also write $\caI=I_{n}^{[k]}$.\\
\indent   An even-order tensor $\A\in \T_{m;n}$ is said to be \emph{invertible} if there exists a tensor $\B\in \T_{m;n}$ such that 
\beq\label{eq: invt}
\B\A = \A \B =\caI.
\eeq
$\B$ is called the inverse of $\A$ and is denoted by $\A^{-1}$. \\

\indent  The invertibility of an even-order tensor can be transferred to that of a square matrix by tensor \emph{matricization}, which is usually called \emph{unfolding}.  
An unfolding of a tensor $\A$ is defined as a process through which the elements of $\A$ is rearranged into a matrix. For more detail concerning unfolding of a general 
tensor, we refer the reader to \cite{K2009}. \\ 
\indent  Given an even-order tensor $\A\in \T_{m;n}$ with $m=2k$.  There are many ways to unfold tensor $\A$ into an $n^{k}\times n^{k}$ matrix.  A \emph{normal unfolding} 
is the process which yields a matrix $A=(a_{ij})$ whose entries are defined as $a_{ij}=A_{i_{1}i_{2}\ldots i_{k}i_{k+1}i_{k+2}\ldots i_{2k}}$ where 
\beq\label{eq: t2m}
 i = 1 + \sum\limits_{r=1}^{k} (i_{r}-1)n^{k-r},  \quad  j = 1 + \sum\limits_{r=1}^{k} (i_{k+r}-1)n^{k-r}.   
\eeq
We call matrix $A$ obtained by the normal unfolding a \emph{normal square} or \emph{NS} matrix of $\A$.  We have 

\begin{thm}\label{th:th02}
Let $\A,\B\in \T_{m;n}$ where $m=2k$ is an even number, and $A,B$ are respectively the NS matrices of $\A$ and $\B$.  Then $\A$ is invertible if and only if 
$A$ is invertible. Furthermore, $\B=\A^{-1}$ if and only if $B=A^{-1}$.   
\end{thm}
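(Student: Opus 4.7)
The strategy is to show that the normal unfolding map $\varphi \colon \T_{m;n} \to \RR^{n^k \times n^k}$ defined by (\ref{eq: t2m}) is an algebra isomorphism between $(\T_{m;n}, \cdot\,)$ (with the contractive product $\A\B$ that contracts the last $k$ modes of $\A$ against the first $k$ modes of $\B$) and the matrix algebra $(\RR^{n^k \times n^k}, \cdot\,)$. Once this is established the theorem is immediate, because invertibility is transported by any algebra isomorphism that maps $\caI$ to $I_{n^k}$.

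First I would observe that the encoding in (\ref{eq: t2m}) is simply the base-$n$ ``lexicographic'' ordering of the $k$-tuples $(i_1,\ldots,i_k)$ and $(i_{k+1},\ldots,i_{2k})$, so $\varphi$ is a linear bijection from $\T_{m;n}$ onto $\RR^{n^k \times n^k}$. Second, I would verify $\varphi(\caI) = I_{n^k}$: by (\ref{eq:defidentensor}),
\[
\caI_{i_1\ldots i_k\,j_1\ldots j_k} = \delta_{i_1 j_1}\delta_{i_2 j_2}\cdots \delta_{i_k j_k},
\]
which equals $1$ precisely when the multi-index $(i_1,\ldots,i_k)$ equals $(j_1,\ldots,j_k)$, i.e.\ exactly when the row-index and column-index produced by (\ref{eq: t2m}) coincide.

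Third, and this is the crux, I would check that $\varphi$ is multiplicative: $\varphi(\A\B) = \varphi(\A)\varphi(\B)$. By the definition of the contractive product used in Lemma~\ref{le:le01},
\[
(\A\B)_{i_1\ldots i_k\,i_{k+1}\ldots i_{2k}} = \sum_{j_1,\ldots,j_k} A_{i_1\ldots i_k\,j_1\ldots j_k}\, B_{j_1\ldots j_k\,i_{k+1}\ldots i_{2k}}.
\]
Letting $i,j,\ell$ be the base-$n$ encodings of $(i_1,\ldots,i_k)$, $(j_1,\ldots,j_k)$, $(i_{k+1},\ldots,i_{2k})$ respectively via (\ref{eq: t2m}), the right-hand side becomes $\sum_{j=1}^{n^k} A_{ij} B_{j\ell} = (AB)_{i\ell}$, which is exactly $\varphi(\A\B)_{i\ell}$. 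I expect this unwinding of indices to be the main technical (though still routine) step, since one must be careful that the same encoding is used consistently for the contracted indices $(j_1,\ldots,j_k)$ on both factors.

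Finally, I would combine the three facts. If $\A$ is invertible with $\B = \A^{-1}$, then $\A\B = \B\A = \caI$, so $AB = BA = \varphi(\caI) = I_{n^k}$, which means $A$ is invertible with $B = A^{-1}$. Conversely, if $A$ is invertible, let $\B := \varphi^{-1}(A^{-1})$; then $\varphi(\A\B) = AA^{-1} = I_{n^k} = \varphi(\caI)$ and similarly $\varphi(\B\A) = I_{n^k}$, so injectivity of $\varphi$ gives $\A\B = \B\A = \caI$. This simultaneously proves both parts of the theorem.
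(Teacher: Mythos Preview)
Your proposal is correct and follows essentially the same route as the paper: the core step in both is the verification that the contractive product of tensors corresponds, under the normal unfolding, to ordinary matrix multiplication, i.e.\ $\varphi(\A\B)=\varphi(\A)\varphi(\B)$. The paper writes this out only for $m=4$ and leaves the conclusion implicit, whereas you phrase it cleanly as an algebra isomorphism (also checking $\varphi(\caI)=I_{n^k}$) and spell out both directions of the equivalence, but the underlying argument is the same.
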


\begin{proof}
We take $m=4$ for our convenience in notations. The argument in general case (i.e. $m=2k$) follows the same route. Write $A=(a_{ij}), B=(b_{ij})$. 
Then our result follows by   
\beyy
(\A\B)_{i_{1}i_{2}i_{3}i_{4}} &=& \sum\limits_{j_{1},j_{2}} A_{i_{1}i_{2}j_{1}j_{2}} B_{j_{1}j_{2} i_{3} i_{4}} \\
                                           &=& \sum\limits_{j_{1},j_{2}} a_{i_{2}+(i_{1}-1)n, j_{2}+(j_{1}-1)n} b_{j_{2}+(j_{1}-1)n, i_{4}+(i_{3}-1)n}\\
                                           &=& \sum\limits_{s=1}^{n^{2}} a_{is} b_{sj} = (AB)_{ij} 
\eeyy
where $\si:=(i_{1},i_{2},i_{3},i_{4})\in S(4;n)$ and  $i=i_{2}+(i_{1}-1)n, j=i_{4}+(i_{3}-1)n$. 
\end{proof}

\begin{cor}\label{co:co03}
Let $\A\in \T_{m;n}$ where $m=2k$ is an even number.  Then $\A$ is invertible if and only if $\det A \neq 0$ where $A$ is the NS matrix of $\A$.  
\end{cor}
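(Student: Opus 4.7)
The plan is to derive this corollary almost immediately from Theorem~\ref{th:th02} combined with the classical determinantal criterion for matrix invertibility. Since Theorem~\ref{th:th02} already establishes the equivalence ``$\A$ invertible $\iff$ its NS matrix $A$ invertible'', the only remaining ingredient is the standard fact that a square matrix over $\RR$ is invertible if and only if its determinant is nonzero.

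More concretely, I would organize the argument as follows. First, invoke Theorem~\ref{th:th02} to reduce the question of invertibility of $\A\in\T_{m;n}$ to the question of invertibility of the $n^{k}\times n^{k}$ matrix $A$ obtained by the normal unfolding described in (\ref{eq: t2m}). Second, apply the classical result from linear algebra that $A\in\RR^{n^k\times n^k}$ is invertible if and only if $\det A\neq 0$. Chaining the two equivalences yields the desired statement. No new computation is needed beyond citing these two facts.

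There is essentially no obstacle here; the work was already absorbed into Theorem~\ref{th:th02}, which furnished the multiplicative compatibility between the tensor product $\A\B$ and the ordinary matrix product $AB$ of the NS matrices. The only minor point worth being explicit about is that the NS matrix of the identity tensor $\caI$ defined by (\ref{eq:defidentensor}) is precisely the $n^{k}\times n^{k}$ identity matrix $I_{n^{k}}$; this follows directly from the Kronecker-delta form $\delta_{i_{1}j_{1}}\delta_{i_{2}j_{2}}\ldots\delta_{i_{k}j_{k}}$ and the index rule (\ref{eq: t2m}). With that observation, ``$B=A^{-1}$'' in the matrix sense really does correspond to ``$\B=\A^{-1}$'' in the tensor sense, which is what makes the reduction to $\det A\neq 0$ clean and unambiguous.
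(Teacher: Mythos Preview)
Your proposal is correct and mirrors exactly what the paper does: the corollary is stated without proof as an immediate consequence of Theorem~\ref{th:th02} together with the classical determinantal criterion for matrix invertibility. Your additional remark that the NS matrix of $\caI$ is $I_{n^{k}}$ is a helpful clarification but not strictly needed, since Theorem~\ref{th:th02} already asserts the equivalence of invertibility directly.
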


\begin{cor}\label{co:co04}
Let $\A\in \T_{m;n}$ where $m=2k$ is an even number.  Then $\A$ is invertible if and only if there is a tensor $\B\in \T_{m;n}$ such that  $\A\B=\caI$.  
Furthermore, the inverse of $\A$ is unique.   
\end{cor}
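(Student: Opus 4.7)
The plan is to reduce everything to the corresponding statement for square matrices via the normal unfolding, using Theorem \ref{th:th02} as the key bridge. The claim has two parts: (i) the existence of a one-sided (right) inverse $\B$ with $\A\B=\caI$ already forces $\A$ to be invertible in the two-sided sense of (\ref{eq: invt}); and (ii) this inverse is unique.

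First I would observe that under the normal unfolding map $\A\mapsto A$ defined by (\ref{eq: t2m}), the identity tensor $\caI$ is sent to the usual $n^{k}\times n^{k}$ identity matrix $I_{n^{k}}$. This is immediate from (\ref{eq:defidentensor}): the entry of $\caI$ indexed by $(i_{1},\ldots,i_{k},j_{1},\ldots,j_{k})$ equals $1$ precisely when $i_{r}=j_{r}$ for every $r$, and by (\ref{eq: t2m}) this is exactly the condition that the row and column indices of the NS matrix coincide. Combined with the product identity $(\A\B)\leftrightarrow AB$ established in the proof of Theorem \ref{th:th02}, the tensor equation $\A\B=\caI$ is equivalent to the matrix equation $AB=I_{n^{k}}$, where $A,B$ are the NS matrices of $\A,\B$.

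Next, I invoke the standard fact for square matrices: if $A,B\in \RR^{n^{k}\times n^{k}}$ satisfy $AB=I_{n^{k}}$, then $A$ is invertible, $B=A^{-1}$, and in particular $BA=I_{n^{k}}$ as well. Translating back via Theorem \ref{th:th02}, this gives $\B\A=\caI$, so $\B$ satisfies (\ref{eq: invt}) and $\A$ is invertible in the sense of the definition, proving the nontrivial direction of the ``if and only if''. The converse is trivial from the definition.

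For uniqueness, suppose $\B_{1},\B_{2}\in \T_{m;n}$ both satisfy $\A\B_{i}=\caI$. By the equivalence just established, each is a two-sided inverse, so the corresponding NS matrices $B_{1},B_{2}$ are both equal to $A^{-1}$ in $\RR^{n^{k}\times n^{k}}$, hence $B_{1}=B_{2}$ and so $\B_{1}=\B_{2}$ since the unfolding map is a bijection between $\T_{m;n}$ and $\RR^{n^{k}\times n^{k}}$. The only point that needs any care is the verification that $\caI$ unfolds to $I_{n^{k}}$; once that is in hand, the rest is a direct appeal to Theorem \ref{th:th02} and elementary linear algebra, so I do not anticipate any real obstacle here.
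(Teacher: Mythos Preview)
Your proposal is correct and follows exactly the route the paper intends: the statement is recorded as a corollary of Theorem~\ref{th:th02} without an explicit proof, and your argument---unfold to $n^{k}\times n^{k}$ matrices, use that a one-sided inverse of a square matrix is two-sided and unique, then transfer back---is precisely the intended derivation. There is nothing to add.
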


\indent  The spectrum theory of tensors is independently introduced by Qi \cite{qieig2005} and Lim \cite{limeig2005} in 2005, and investigated by Qi\cite{qieig2005}, Lim\cite{limeig2005}, and Hu,Huang,Ling and Qi \cite{hhlq2011}. \\
\indent  Let $\A=(A_{i_{1}i_{2}\ldots i_{m}})\in \T_{m;n}$ and $0\neq \bx\in \CC^{n}$.  Then the product $\A\bx^{m-1}\in \CC^{n}$ is a vector. For any number $\la$, if there 
exists a nonzero vector $\bu\in \CC^{n}$ such that 
\beq\label{eq: defeigvalue}
 \A\bu^{m-1} = \la \bu^{[m-1]}
\eeq
where $\bu^{k}$ is an $k$-order $n$-dimensional rank-1 tensor generated by $\bu$, $\bu^{[k]}\in \CC^{n}$ is a vector whose $i$th coordinate is defined as $u_{i}^{k}$ 
where $\bu=(u_{1},u_{2},\ldots,u_{n})^{\top}$.  We call $(\la, \bu)$ an \emph{eigenpair} of $\A$ in which $\la$ is called an \emph{eigenvalue} of $\A$ and $\bu$ is called an \emph{eigenvector} associated with $\la$.  The pair $(\la, \bu)$ is called an H-eigenpair if $\bu$ is a real vector, which is called an H-eigenvector. Note that $\la$ is also a real number in this case and is called a H-eigenvalue of $\A$ if $\A$ is real.  It is shown by Qi et al. \cite{qieig2005} that a tensor $\A\in \T_{m;n}$ (for an even $m$) is positive semidefinite if and only if all its H-(Z-)eigenvalues are nonnegative (see e.g. \cite{qiluo2017}).   \\

\section{Symmetric tensors and anti-symmetric tensors} 
\setcounter{equation}{0}

Let $n>1$ be a positive integer and $m=2k>0$ be an even number.  Denote by $\cP_{m}$ the set of all permutations on set $[m]$.  For any tensor 
$\A=(A_{i_{1}i_{2}\ldots i_{m}})\in \T_{m; n}$ and any permutation $\si\in \cP_{m}$, we define $\si(\A)$ as the tensor $\A^{(\si)}=(A^{(\si)}_{i_{1}i_{2}\ldots i_{m}})$ where 
\beq\label{eq: Asigma}  
A_{i_{1}i_{2}\ldots i_{m}}^{(\si)} = A_{i_{\si(1)}i_{\si(2)}\ldots i_{\si(m)}}, \quad \forall \si:=(i_{1},i_{2},\ldots,i_{m})\in S(m;n). 
\eeq
We call $\A$ \emph{$\si$-symmetric} if it satisfies $\si(\A) = \A$.  $\A$ is called \emph{$\si$-sign symmetric} if $\si(\A) = (-1)^{\tau(\si)} \A$ where $\tau(\si)$ is the inverse number of $\si$.  A tensor $\A\in \T_{m;n}$ is called \emph{anti-symmetric} if $\A$ it is $\si$-sign symmetric for all $\si\in \cP_{m}$.  A symmetric tensor is $\si$-symmetric 
for all $\si\in \cP_{m}$.  Now we denote 
\beq\label{eq: symperm} 
\caS =\frac{1}{\sqrt{m!}} \sum\limits_{\si\in \cP_{m}} \si 
\eeq
then $\caS\colon \T_{m;n}\to \T_{m;n}$ is a linear operator sending each tensor in $\T_{m;n}$ into $\caS_{m;n}$ \cite{cglm2008}. Note that 
\beq\label{eq: permidentitity}
\si\circ \caS = \caS\circ \si =\caS, \quad  \forall  \si\in \cP_{m}.
\eeq
A tensor $\A\in \T_{m;n}$ is symmetric if and only if $\A=\caS(\A)$.  Given a tensor $\A\in \T_{m;n}$.  The \emph{symmetrization} of $\A$ is defined as tensor $\caS(\A)$.  
Note that the polynomial associated with $\A$ is the same as that with $\caS(\A)$, which makes reasonable for us to assume the symmetry of tensors. For our convenience, 
we denote the set of \mnsts by $\st_{m;n}$.\\
\indent  A symmetric tensor $\A\in \st_{m;n}$ is called  \emph{separable}  if 
\beq\label{eq: separableTensor} 
\A = \caS(\bu_{1}\times \bu_{2}\times \ldots \times \bu_{m})  
\eeq
for some vectors $\bu_{1},\bu_{2},\ldots,\bu_{m}\in \RR^{n}$. Some natural questions are: when is a symmetric tensor separable? can a symmetric tensor be decomposed into the sum of some separable tensors? what is the rank of a separable tensor?  \\   
\indent  Now we denote 

Let $\bv_{1},\bv_{2},\cdots, \bv_{m}\in \CC^{n}$ where each $\bv_{j}$ is a nonzero vector, and write  
\[ 
\caL :==\frac{1}{\sqrt{m!}} \sum\limits_{\si\in \cP_{m}} (-1)^{\tau(\si)} \si .
\]
Then $\caL$ is a linear operator on $\T_{m;n}$.  We denote  
\beq\label{eq: asymt} 
\bv_{1}\wedge\cdots\wedge \bv_{m} =\caL(\bv_{1}\times \bv_{2}\times \cdots\times \bv_{m})
\eeq
and 
\beq\label{eq: sepsymt} 
\bv_{1}\vee \bv_{2}\vee \cdots\vee \bv_{m} =\caS(\bv_{1}\times \bv_{2}\times \cdots\times \bv_{m})
\eeq
For $m=2$, the operator $\wedge$ produces an $n\times n$ anti-symmetric matrix of rank 2 when $\bv_{1}, \bv_{2}\in \CC^{n}$ are linearly independent ($n\ge 2$).  
In the following we will show that  tensor $\bv_{1}\wedge\cdots\wedge \bv_{m}$ must be an anti-symmetric tensor in general case.\\
\begin{thm}\label{th3-1}
Let $\bu_{j},\bv_{j}\in \RR^{n}, j\in [m]$ and $A=(a_{ij})\in \RR^{m\times m}$ with $a_{ij}=\seq{\bu_{i},\bv_{j}}$ for all $i,j\in [n]$. Denote 
$\A_{l}=\caL(\bu_{1}\times\cdots\times\bu_{m}), \B_{l}=\caL(\bv_{1}\times\cdots\times\bv_{m})$ and $\A_{s}=\caS(\bu_{1}\times\cdots\times\bu_{m}), \B_{s}=\caS(\bv_{1}\times\cdots\times\bv_{m})$. Then we have 
\beq\label{eq: innerprod-wedge}
\seq{\A_{l}, \B_{l}} =\det(A) 
\eeq
and 
\beq\label{eq: innerprod-sym}
\seq{\A_{s}, \B_{s}} =\perm(A) 
\eeq
where $\perm(A)$ denotes the permanent of matrix $A$. 
\end{thm}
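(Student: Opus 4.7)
My plan is to reduce both inner products to sums over $\cP_m \times \cP_m$ by expanding $\caL$ and $\caS$, then collapse one of the two summations via a change of variable on permutations.

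First, I would expand the operators directly on rank-1 tensors. For the tensor $\mathcal{T}=\bu_{1}\times\cdots\times \bu_{m}$, I need to check that $\sigma(\mathcal{T})=\bu_{\sigma^{-1}(1)}\times\cdots\times \bu_{\sigma^{-1}(m)}$ (equivalently, $\bu_{\sigma(1)}\times\cdots\times \bu_{\sigma(m)}$ after reindexing the summation, since $\mathrm{sgn}(\sigma)=\mathrm{sgn}(\sigma^{-1})$). This yields
\[
\A_{l}=\frac{1}{\sqrt{m!}}\sum_{\sigma\in\cP_{m}}(-1)^{\tau(\sigma)}\bu_{\sigma(1)}\times\cdots\times \bu_{\sigma(m)},
\qquad
\B_{l}=\frac{1}{\sqrt{m!}}\sum_{\pi\in\cP_{m}}(-1)^{\tau(\pi)}\bv_{\pi(1)}\times\cdots\times \bv_{\pi(m)},
\]
with the analogous expressions for $\A_{s},\B_{s}$ without signs.

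Next I would apply the elementary fact that for rank-1 tensors the entrywise inner product factors:
\[
\seq{\bw_{1}\times\cdots\times \bw_{m},\;\bz_{1}\times\cdots\times \bz_{m}}=\prod_{k=1}^{m}\seq{\bw_{k},\bz_{k}}.
\]
Combining this with bilinearity of the inner product and using $a_{ij}=\seq{\bu_{i},\bv_{j}}$ gives
\[
\seq{\A_{l},\B_{l}}=\frac{1}{m!}\sum_{\sigma,\pi\in\cP_{m}}(-1)^{\tau(\sigma)+\tau(\pi)}\prod_{k=1}^{m}a_{\sigma(k),\pi(k)},
\]
and a parallel formula without the sign factor for $\seq{\A_{s},\B_{s}}$. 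The square-root normalization squares correctly to $1/m!$, which is exactly the factor I will need to cancel.

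The key step is the change of variable. For each $k$, letting $j=\sigma(k)$ rewrites $\prod_{k}a_{\sigma(k),\pi(k)}=\prod_{j}a_{j,(\pi\sigma^{-1})(j)}$. Setting $\rho=\pi\sigma^{-1}$ and using $(-1)^{\tau(\sigma)+\tau(\pi)}=(-1)^{\tau(\rho)}$, for fixed $\rho$ the variable $\sigma$ then ranges freely over $\cP_{m}$, contributing a factor of $m!$ that cancels the $1/m!$ outside. Hence
\[
\seq{\A_{l},\B_{l}}=\sum_{\rho\in\cP_{m}}(-1)^{\tau(\rho)}\prod_{j=1}^{m}a_{j,\rho(j)}=\det(A),
\]
and the same reindexing without signs gives $\seq{\A_{s},\B_{s}}=\sum_{\rho}\prod_{j}a_{j,\rho(j)}=\perm(A)$.

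I do not expect a serious obstacle here; the only place where I must be careful is the very first step, verifying how $\sigma$ acts on a rank-1 tensor in the author's convention (\ref{eq: Asigma}) so that the sign and permutation indices line up consistently when I later substitute $\rho=\pi\sigma^{-1}$. Once that bookkeeping is settled, both identities fall out simultaneously, with the determinant/permanent dichotomy appearing only through the presence or absence of the sign factor $(-1)^{\tau(\rho)}$.
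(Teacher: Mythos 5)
Your proposal is correct and follows essentially the same route as the paper: expand both operators into a double sum over $\cP_{m}\times\cP_{m}$, use multiplicativity of the inner product on rank-1 tensors to produce $\prod_{k}a_{\sigma(k),\pi(k)}$, and collapse to a single sum via the substitution $\rho=\pi\sigma^{-1}$, with the factor $m!$ cancelling the normalization. The only cosmetic difference is that the paper performs the collapse at the level of $\seq{\theta(\bU),\kappa(\bV)}=\seq{\bU,\theta^{-1}\kappa(\bV)}$ before expanding into matrix entries, whereas you expand first and reindex afterwards.
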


\begin{proof}
For convenience, we denote $\bU:=\bu_{1}\times\cdots\times\bu_{m}, \bV:=\bv_{1}\times\cdots\times\bv_{m}$. We come to prove (\ref{eq: innerprod-wedge}). By definition  
\beyy
\seq{\A_{l}, \B_{l}}&=& \frac{1}{m!} \sum\limits_{\theta} \sum\limits_{\kappa} (-1)^{\tau(\theta)}(-1)^{\tau(\kappa)} \seq{\theta(\bU), \kappa(\bV)}\\
                            &=& \frac{1}{m!} \sum\limits_{\theta,\kappa} (-1)^{\tau(\theta^{-1}\kappa)} \seq{\bU, \theta^{-1}\kappa(\bV)}\\
                            &=& \sum\limits_{\delta\in \cP_{m}} (-1)^{\tau(\delta)} \seq{\bU, \delta(\bV)}\\
                            &=& \sum\limits_{\delta\in \cP_{m}} (-1)^{\tau(\delta)} \seq{\bu_{1},\bv_{\delta(1)}}\seq{\bu_{2},\bv_{\delta(2)}} \cdots \seq{\bu_{m},\bv_{\delta(m)}} \\
                            &=& \sum\limits_{\delta\in \cP_{m}} (-1)^{\tau(\delta)}  a_{1,j_{1}}a_{2,j_{2}}\cdots a_{m,j_{m}}\\
                            &=& \det(A)
\eeyy 
where $\delta:=(j_{1},j_{2},\cdots, j_{m})\in \cP_{m}$ is any permutation of $[m]$ in the last second equation. The proof of formula (\ref{eq: innerprod-sym}) can be deduced similarly.   
\end{proof}

\begin{cor}\label{co3-2}
Let $U=[\bu_{1},\bu_{2},\cdots,\bu_{m}]\in \CC^{n\times m}$ and $\A =\bu_{1}\wedge \cdots\wedge\bu_{m}$. Then we have 
\beq\label{eq: wedgenorm}
\norm{\A} = \si_{1}\si_{2}\ldots \si_{m} 
\eeq
where $\si_{1}\ge \si_{2}\ge \cdots \ge \si_{m}\ge 0$ denote the singular values of matrix $U$, and $\norm{A}$ is the Frobenius norm of a tensor.   
\end{cor}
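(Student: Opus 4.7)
The plan is to obtain the squared Frobenius norm of $\A$ directly from Theorem~\ref{th3-1} by specializing $\bv_{j}=\bu_{j}$, and then to identify the resulting determinant with the product of the squared singular values of $U$.

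First I would set $\bv_{j}=\bu_{j}$ for all $j\in[m]$ in Theorem~\ref{th3-1}, so that $\B_{l}=\A$ and the matrix $A=(a_{ij})$ with $a_{ij}=\seq{\bu_{i},\bu_{j}}$ becomes the Gram matrix of the columns of $U$. Formula (\ref{eq: innerprod-wedge}) then gives
\[
\norm{\A}^{2}=\seq{\A,\A}=\det(A).
\]
Since $A$ is exactly $U^{\ast}U$ (or $U^{\top}U$ in the real case), its eigenvalues are the squares of the singular values of $U$, namely $\si_{1}^{2}\ge\si_{2}^{2}\ge\cdots\ge\si_{m}^{2}\ge 0$. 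Therefore
\[
\det(A)=\prod_{j=1}^{m}\si_{j}^{2},
\]
and taking square roots yields (\ref{eq: wedgenorm}).

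There is essentially no obstacle here; the only point worth being careful about is the convention for the inner product used in defining $\norm{\cdot}$ in the complex case, which must match the one that makes $A=U^{\ast}U$ a positive semidefinite Hermitian matrix with eigenvalues $\si_{j}^{2}$. In the degenerate situation where $\bu_{1},\ldots,\bu_{m}$ are linearly dependent, $U$ has rank less than $m$, at least one $\si_{j}$ vanishes, and both sides of (\ref{eq: wedgenorm}) are zero, consistent with $\A=\caL(\bu_{1}\times\cdots\times\bu_{m})=0$. Thus the single application of Theorem~\ref{th3-1} already carries the full content of the corollary.
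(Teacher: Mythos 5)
Your proposal is correct and follows essentially the same route as the paper: specialize Theorem \ref{th3-1} to $\bv_{j}=\bu_{j}$ so that $\seq{\A,\A}=\det(U^{\ast}U)$, then identify $\det(U^{\ast}U)$ with $\prod_{j}\si_{j}^{2}$ (the paper does this via the explicit SVD $U=QDW^{\ast}$, you via the eigenvalues of the Gram matrix, which is the same computation). Your added remarks about the complex inner-product convention and the degenerate rank-deficient case are sensible but not points of divergence from the paper's argument.
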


\begin{proof}
Denote $A=U^{\ast}U$. Then $A=(a_{ij})$ with $a_{ij}=\seq{\bu_{i},\bu_{j}}$ for all $i, j$.  Let $U=QDW^{\ast}$ be the singular value decomposition of $U$ with 
$Q\in \CC^{n\times m}, W\in \CC^{m\times m}$ be column orthogonal and $D=\diag(\si_{1}, \si_{2}, \ldots, \si_{m})$ ($\si_{1}\ge \si_{2}\ge\ldots \ge \si_{m}\ge 0$, here assume that $m\le n$).   Then $\det(U^{\ast}U)=\si_{1}^{2}\si_{2}^{2}\ldots \si_{m}^{2}$.  By Theorem \ref{th3-1}  we have 
\[ \norm{\A} = \sqrt{\seq{\A, \A}} = (\det A)^{1/2} =(\det(U^{\ast}U))^{1/2} = \si_{1}\si_{2}\ldots \si_{m}. \] 
\end{proof}

\begin{cor}\label{co3-3}
Let $\bu_{1},\bu_{2},\cdots,\bu_{m}\in \CC^{n}$ and $\A =\bu_{1}\wedge \cdots\wedge\bu_{m}$.  Then $\A=\caO$ if and only if $\bu_{1},\bu_{2},\cdots,\bu_{m}$ are linear dependent.   
\end{cor}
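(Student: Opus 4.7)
The plan is to deduce the corollary directly from Theorem~\ref{th3-1} via the Gram-determinant criterion for linear independence. This lets both implications be handled uniformly in one stroke, rather than treating the ``only if'' direction by an explicit wedge-algebra computation.

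Specializing Theorem~\ref{th3-1} to the case $\bv_{j}=\bu_{j}$ for every $j\in[m]$ gives $\B_{l}=\A_{l}=\A$, and turns the matrix $A$ appearing in the theorem into the Gram matrix $G:=U^{\ast}U\in \CC^{m\times m}$ associated with $U=[\bu_{1},\bu_{2},\ldots,\bu_{m}]$. Identity (\ref{eq: innerprod-wedge}) then reads
\[
\norm{\A}^{2}=\seq{\A,\A}=\det(G).
\]
Since the Frobenius norm vanishes only on the zero tensor, this reduces the corollary to the statement that $\det(G)=0$ iff $\bu_{1},\ldots,\bu_{m}$ are linearly dependent.

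For the latter, I would just invoke the standard argument: if $Gx=0$ for some $x\neq 0$, then $\norm{Ux}^{2}=x^{\ast}Gx=0$ forces $Ux=0$, and conversely $Ux=0$ immediately gives $Gx=0$. Hence $G$ is singular exactly when the columns of $U$ are linearly dependent, i.e.\ when $\bu_{1},\ldots,\bu_{m}$ are linearly dependent, completing the argument.

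Two minor technical points deserve mention, but neither is a real obstacle. First, Theorem~\ref{th3-1} is stated over $\RR^{n}$ while Corollary~\ref{co3-3} is phrased over $\CC^{n}$; one simply observes that the calculation in the proof of Theorem~\ref{th3-1} carries over verbatim once $\seq{\cdot,\cdot}$ is read as the Hermitian inner product, exactly as was already used implicitly in Corollary~\ref{co3-2}. Second, Corollary~\ref{co3-2} imposes $m\le n$, but the identity $\norm{\A}^{2}=\det(G)$ holds with no restriction on $m$: when $m>n$ the Gram matrix is forced to be singular and the vectors are automatically dependent, so the equivalence continues to hold trivially and no separate case analysis is needed. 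Thus the proof is essentially a one-line consequence of Theorem~\ref{th3-1}, and I expect no real difficulty.
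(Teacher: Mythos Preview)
Your proof is correct and follows essentially the same route as the paper: the paper invokes Corollary~\ref{co3-2} (itself an immediate consequence of Theorem~\ref{th3-1}) to get $\norm{\A}^{2}=\det(U^{\ast}U)$ and then concludes via rank/singular-value considerations, whereas you go directly from Theorem~\ref{th3-1} to the Gram-determinant criterion. Your version is marginally cleaner in that it sidesteps the implicit $m\le n$ assumption of Corollary~\ref{co3-2}, but the underlying argument is identical.
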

\begin{proof}
By Corollary \ref{co3-2}, we get $\A=\caO$ if and only if $\norm{\A}=0$ if and only if $\rank(A)<m$ if and only if $\bu_{1},\bu_{2},\cdots,\bu_{m}$ are linear dependent.
\end{proof}

\indent   The following lemma implies that the wedging is a multilinear operation.  

\begin{lem}\label{le: le3-4}
Let $j\in [m]$ and $\bu_{j}, \bw_{j}, \bv_{1}, \bv_{2}, \ldots, \bv_{m}\in \CC^{n}$,  and $\la\in \CC$ be a scalar. Then   
\begin{description}
\item[(1)]  $\bv_{1}\wedge\cdots \wedge (\bu_{j}+\bw_{j}) \cdots\wedge \bv_{m} =\bv_{1}\wedge\cdots \wedge\bu_{j} \cdots\wedge \bv_{m} +\bv_{1}\wedge\cdots \wedge\bw_{j} \cdots\wedge \bv_{m}$.
\item[(2)]  $\bv_{1}\wedge\cdots \wedge (\la\bv_{j})\cdots\wedge \bv_{m} =\la (\bv_{1}\wedge\cdots \wedge\bv_{j} \cdots\wedge \bv_{m})$. 
\end{description}
\end{lem}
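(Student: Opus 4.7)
The plan is to deduce both multilinearity statements directly from two ingredients: the multilinearity of the outer tensor product $\times$ in each factor, and the linearity of the operator $\caL$ on $\T_{m;n}$ already noted in the paragraph preceding Theorem~\ref{th3-1}.

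First I would establish the entrywise fact that the outer product $\bv_{1}\times\cdots\times\bv_{m}$ is multilinear in each factor $\bv_{j}$. Indeed, its $(i_{1},\ldots,i_{m})$-entry is the scalar product $v_{1,i_{1}}\cdots v_{m,i_{m}}$, and distributivity and commutativity of multiplication in $\CC$ immediately yield additivity under $\bv_{j}\mapsto \bu_{j}+\bw_{j}$ and homogeneity under $\bv_{j}\mapsto \la\bv_{j}$. Hence the following tensor identities hold in $\T_{m;n}$:
\[
\bv_{1}\times\cdots\times(\bu_{j}+\bw_{j})\times\cdots\times\bv_{m}=\bv_{1}\times\cdots\times\bu_{j}\times\cdots\times\bv_{m}+\bv_{1}\times\cdots\times\bw_{j}\times\cdots\times\bv_{m},
\]
\[
\bv_{1}\times\cdots\times(\la\bv_{j})\times\cdots\times\bv_{m}=\la\,\bv_{1}\times\cdots\times\bv_{j}\times\cdots\times\bv_{m}.
\]
Applying the linear operator $\caL$ to both sides of each identity and invoking the definition (\ref{eq: asymt}) of the wedge then delivers parts (1) and (2) respectively.

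The only step that needs a brief remark, rather than constituting an obstacle, is the linearity of $\caL$ itself. This reduces to the observation that each permutation action $\si\colon \T_{m;n}\to \T_{m;n}$ from (\ref{eq: Asigma}) is $\CC$-linear, since it merely relabels indices, so $(\A+\B)^{(\si)}=\A^{(\si)}+\B^{(\si)}$ and $(\la\A)^{(\si)}=\la\A^{(\si)}$. The weighted sum $\caL=\frac{1}{\sqrt{m!}}\sum_{\si\in\cP_{m}}(-1)^{\tau(\si)}\si$ then inherits linearity, and the whole lemma reduces to routine bookkeeping. Exactly the same proof works with $\caS$ in place of $\caL$, giving the analogous multilinearity of $\vee$ for free.
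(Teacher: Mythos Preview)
Your proof is correct. The paper actually states this lemma without proof, treating it as routine; your argument---multilinearity of the outer product in each slot followed by linearity of the operator $\caL$---is exactly the natural justification the paper leaves implicit.
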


\indent  Now we are ready to show that $\bv_{1}\wedge \bv_{2}\wedge \cdots\wedge \bv_{m}$ is an anti-symmetric tensor.  

\begin{thm}\label{th: th3-5} 
Let $\A=\caL(\bv_{1}\times\cdots\times\bv_{m})$ where $\bv_{1},\bv_{2},\cdots, \bv_{m}\in \RR^{n}$ with each $\bv_{j}\neq 0$. Then $\A$ is anti-symmetric. 
\end{thm}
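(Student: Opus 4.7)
The plan is to establish the operator-level identity $\pi \circ \caL = (-1)^{\tau(\pi)}\caL$ for every $\pi \in \cP_m$, which is the anti-symmetric analogue of equation~(\ref{eq: permidentitity}), and then apply it to the rank-one tensor $\bv_{1}\times\bv_{2}\times\cdots\times\bv_{m}$.

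First I would unfold the composition using the definition of $\caL$:
\[
\pi \circ \caL = \frac{1}{\sqrt{m!}}\sum_{\si \in \cP_{m}} (-1)^{\tau(\si)}\,(\pi \circ \si).
\]
Reindex the sum by the bijection $\si \mapsto \rho := \pi \circ \si$ on $\cP_{m}$. Using multiplicativity of the sign, $\tau(\pi\circ\si) \equiv \tau(\pi)+\tau(\si)\pmod{2}$, so $(-1)^{\tau(\si)} = (-1)^{\tau(\pi)}(-1)^{\tau(\rho)}$. Pulling the constant $(-1)^{\tau(\pi)}$ out yields
\[
\pi \circ \caL = (-1)^{\tau(\pi)}\,\frac{1}{\sqrt{m!}}\sum_{\rho \in \cP_{m}} (-1)^{\tau(\rho)}\rho = (-1)^{\tau(\pi)}\,\caL.
\]
A symmetric reindexing $\si \mapsto \si\circ \pi$ gives $\caL \circ \pi = (-1)^{\tau(\pi)}\caL$ as well.

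Next I would evaluate both sides at $\bV := \bv_{1}\times\bv_{2}\times\cdots\times\bv_{m}$. Applying the identity $\pi\circ\caL = (-1)^{\tau(\pi)}\caL$, we obtain
\[
\pi(\A) \;=\; \pi\bigl(\caL(\bV)\bigr) \;=\; (-1)^{\tau(\pi)}\caL(\bV) \;=\; (-1)^{\tau(\pi)}\A,
\]
which is exactly the condition in the definition of an anti-symmetric tensor ($\pi$-sign symmetry for every $\pi \in \cP_{m}$). Hence $\A$ is anti-symmetric.

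The only subtlety is making sure that the action of $\cP_{m}$ on $\T_{m;n}$ defined by (\ref{eq: Asigma}) really is a left action, i.e.\ that $\pi(\si(\A)) = (\pi\circ\si)(\A)$, so the reindexing step is legitimate. This is a routine index check: writing out $(\pi(\si(\A)))_{i_{1}\ldots i_{m}} = \si(\A)_{i_{\pi(1)}\ldots i_{\pi(m)}} = A_{i_{\pi(\si(1))}\ldots i_{\pi(\si(m))}}$ matches $((\pi\circ\si)(\A))_{i_{1}\ldots i_{m}}$ directly from (\ref{eq: Asigma}). I would include a brief parenthetical remark confirming this, since it is the only non-trivial point in the argument; the nonzero-vector hypothesis on the $\bv_{j}$ plays no role here (the conclusion in fact holds for any $m$ vectors).
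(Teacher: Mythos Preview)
Your proof is correct and follows essentially the same approach as the paper: both arguments reindex the sum over $\cP_{m}$ using the group structure and the multiplicativity of sign to pull out the factor $(-1)^{\tau(\pi)}$. The only cosmetic difference is packaging---you first establish the operator identity $\pi\circ\caL=(-1)^{\tau(\pi)}\caL$ and then evaluate at $\bV$, whereas the paper carries out the same reindexing directly at the level of entries; your remark that the nonzero hypothesis on the $\bv_j$ is unnecessary is also correct.
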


\begin{proof}
For any given $\phi\in \cP_{m}$, we want to show that  $\A^{(\phi)} =(-1)^{\tau(\phi)} \A$. For this purpose, we let $\si=(i_{1},i_{2},\ldots,i_{m})\in S(m;n)$. Then we have 
\beyy
  A^{(\phi)}_{i_{1}i_{2}\ldots i_{m}}  &=& A_{i_{\phi(1)}i_{\phi(2)}\ldots i_{\phi(m)}} \\
                                                       &=& \frac{1}{\sqrt{m!}} \sum\limits_{\si\in \cP_{m}} (-1)^{\tau(\si)} \bv_{\si(\phi(1))}\times \bv_{\si(\phi(2))}\times \cdots\times \bv_{\si(\phi(m))} \\
                                                       &=&(-1)^{\tau(\phi)}\frac{1}{\sqrt{m!}} \sum\limits_{\theta\in \cP_{m}} (-1)^{\tau(\theta)} \bv_{\theta(1)}\times \bv_{\theta(2)}\times \cdots\times \bv_{\theta(m)} \\
                                                       &=&(-1)^{\tau(\phi)} A_{i_{1}i_{2}\ldots i_{m}}.
\eeyy 
Here we denote $\si\phi=\theta$.  Note that $\cP_{m}\phi =\cP_{m}$ for any $\phi\in \cP_{m}$, and that $(-1)^{\tau(\al\beta)} =(-1)^{\tau(\al)}(-1)^{\tau(\beta)}$. 
\end{proof}
 
\indent A tensor $\A\in \T_{m;n}$ is called \emph{separable anti-symmetric} or \emph{SAS} if there exist some vectors $\bv_{1},\bv_{2},\cdots, \bv_{m}\in \RR^{n}$ such that 
$\A=\bv_{1}\wedge\cdots\wedge \bv_{m}$.  Note that $\A=0$ if  $\bv_{j}=0$ for some $j$ or  $\bv_{i}=\bv_{j}$ for some distinct $i,j$. Our next theorem shows that $\A$ is not 
zero only if $\bv_{1},\bv_{2},\cdots, \bv_{m}$ are linearly independent. Here $\caO$ stands for a zero tensor of appropriate size. \\
\indent Let $\bv_{1},\bv_{2},\cdots, \bv_{m}$ be linearly dependent, then there exists a vector $\bv_{j}$ which can be expressed as a linear combination of the others.  
We assume w.l.g. that  $\bv_{m}=\la_{1}\bv_{1}+\ldots +\la_{m-1}\bv_{m-1}$ where $\la_{j}\in \RR$. Corollary \ref{co3-3} can also be deduced by Lemma \ref{le: le3-4} since 
\beyy
 \bv_{1}\wedge\cdots\wedge \bv_{m-1} \wedge \bv_{m} &=& \bv_{1}\wedge\cdots\wedge \bv_{m-1} \wedge (\sum\limits_{j=1}^{m-1} \la_{j}\bv_{j})\\
                                                                                        &=& \sum\limits_{j=1}^{m-1} \la_{j} \bv_{1}\wedge\cdots\wedge \bv_{m-1} \wedge \bv_{j}\\
                                                                                        &=& \caO. 
\eeyy

\indent  An $m$-order $n$-dimensional anti-symmetric tensor $\bv_{1}\wedge\cdots\wedge \bv_{m}$ can be constructed recursively from vectors $\bv_{1},\bv_{2},\cdots, \bv_{m}$.  For this purpose, we define 
\beq\label{eq: bowtie} 
\A\bowtie \bu = \frac{1}{p} \sum\limits_{j=1}^{p} \A\boxtimes_{j} \bu 
\eeq
where $\A\in \T_{p-1; n}, \bu\in \RR^{n}$ and the S-product of $\A$ and $\bu$ in (\ref{eq: bowtie}) is defined with 
\[ S_{1}=\set{1,2,\ldots,j-1, j+1,\ldots,p}, S_{2}=\set{j}. \]
We call the multiplication defined by (\ref{eq: bowtie}) the \emph{bowtie product} of $\A$ and vector $\bu$. 
The bowtie product lift a $(p-1)$-order tensor to an $p$-order tensor. \\  
\indent  Now we are ready to state the bowtie process: 
\begin{thm}\label{th:th3-6}
Let $\A^{(1)}=\bv_{1}, \A^{(2)}=\bv_{1}\wedge \bv_{2}$ and $\A^{(k)}:=\bv_{1}\wedge\bv_{2}\wedge\ldots \wedge \bv_{k}\in \T_{k;n}, k=2,3,\ldots$.  Then the sequence
$\A^{(k)}$ can be constructed through formula  
\beq\label{eq: Akrecursive}
 \A^{(p+1)} =  \A^{(p)} \bowtie \bv_{p+1}, \quad  p=2,3,\ldots,m-1.  
\eeq
\end{thm}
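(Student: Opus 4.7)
My plan is to induct on $p$, using a Laplace-expansion representation of the wedge product to match its entries with the summands of the bowtie sum. By the definition of $\caL$, the $(i_{1},\ldots,i_{k})$-entry of $\A^{(k)}=\bv_{1}\wedge\cdots\wedge\bv_{k}$ equals $\frac{1}{\sqrt{k!}}\det M^{(i_{1},\ldots,i_{k})}$, where $M^{(i_{1},\ldots,i_{k})}$ denotes the $k\times k$ matrix whose $(r,s)$-entry is $v_{r,i_{s}}$. This determinantal form (implicit already in Corollary~\ref{co3-2}) will be the main tool.

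The base case $p=2$ reduces to a direct verification from the definitions of $\wedge$, $\boxtimes_{j}$, and $\bowtie$. For the inductive step, assume $\A^{(p)}=\bv_{1}\wedge\cdots\wedge\bv_{p}$ and fix a tuple $(i_{1},\ldots,i_{p+1})$. I would expand $\det M^{(i_{1},\ldots,i_{p+1})}$ along the row corresponding to $\bv_{p+1}$; the $j$-th cofactor is $v_{p+1,i_{j}}$ times the determinant of a $p\times p$ minor, which by the inductive hypothesis equals $\sqrt{p!}$ times the entry of $\A^{(p)}$ indexed by $(i_{1},\ldots,\widehat{i_{j}},\ldots,i_{p+1})$. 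This is precisely the $(i_{1},\ldots,i_{p+1})$-entry of $\A^{(p)}\boxtimes_{j}\bv_{p+1}$. Combinatorially, the expansion amounts to decomposing $\cP_{p+1}$ as the disjoint union $\bigsqcup_{j=1}^{p+1}\{\pi\in\cP_{p+1}:\pi^{-1}(p+1)=j\}$, with each block in natural bijection with $\cP_{p}$ (via $\sigma\mapsto \pi_{j,\sigma}$ that inserts $p+1$ at position $j$), and reassembling yields the stated recursion.

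The main obstacle is the sign-and-normalization bookkeeping. Laplace expansion naturally produces alternating cofactor signs $(-1)^{p+1+j}$ and the overall factor $\sqrt{p!}/\sqrt{(p+1)!}=1/\sqrt{p+1}$, whereas formula (\ref{eq: bowtie}) sums unsigned terms with the normalization $1/(p+1)$. To reconcile the two, I would invoke the antisymmetry of $\A^{(p)}$ established in Theorem~\ref{th: th3-5}: the sign $(-1)^{p+1+j}$ picked up when $\bv_{p+1}$ is moved from slot $p+1$ to slot $j$ is absorbed by the compensating sign $(-1)^{p-j}$ incurred when the remaining $p$ companion indices are simultaneously rearranged inside $\A^{(p)}$, so the signed Laplace sum and the unsigned bowtie sum become term-by-term comparable. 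Matching the residual $\sqrt{p+1}$ against the $p+1$ normalization is the final delicate step, which I anticipate being the most error-prone part of the write-up; it is here that I would most carefully check the convention chosen for $\wedge$ in (\ref{eq: asymt}) against the one implicit in (\ref{eq: bowtie}).
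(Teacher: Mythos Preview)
The paper states Theorem~\ref{th:th3-6} without proof, so there is nothing to compare against directly. Your approach via the determinantal description and Laplace expansion is the natural one, and your identification of the normalization mismatch is on target. However, the sign-absorption step you propose does not go through, and in fact the statement as written (with the paper's conventions for $\caL$ in (\ref{eq: asymt}) and for $\bowtie$ in (\ref{eq: bowtie})) is false.

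Concretely: in both the Laplace expansion and the bowtie sum, the $j$-th term carries the \emph{same} index tuple $(i_{1},\ldots,\widehat{i_{j}},\ldots,i_{p+1})$ on $\A^{(p)}$, in the same order. There is no rearrangement of the companion indices, so antisymmetry of $\A^{(p)}$ cannot manufacture the alternating sign $(-1)^{p+1+j}$ you need. A direct check at $p=2$ makes this visible: with $\A^{(2)}=\bv_{1}\wedge\bv_{2}$ antisymmetric, one computes
\[
(\A^{(2)}\bowtie\bv_{3})_{i_{1}i_{2}i_{3}}
=\tfrac{1}{3}\bigl[v_{3,i_{1}}\A^{(2)}_{i_{2}i_{3}}+v_{3,i_{2}}\A^{(2)}_{i_{1}i_{3}}+v_{3,i_{3}}\A^{(2)}_{i_{1}i_{2}}\bigr],
\]
and swapping $i_{1}\leftrightarrow i_{2}$ shows this tensor is \emph{not} antisymmetric in general, hence cannot equal $\bv_{1}\wedge\bv_{2}\wedge\bv_{3}$. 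Even ignoring signs, the coefficients $\tfrac{1}{\sqrt{p+1}}$ (from $\sqrt{p!}/\sqrt{(p+1)!}$) and $\tfrac{1}{p+1}$ do not match, exactly as you suspected. The theorem becomes correct only if one either redefines $\bowtie$ with alternating signs $(-1)^{j}$ in the sum (\ref{eq: bowtie}) and normalization $\tfrac{1}{\sqrt{p}}$, or else changes the normalization in $\caL$; your write-up should say so explicitly rather than attempt to force the identity through.
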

   
\indent The following example gives an expression for a SAS tensor of order 2, i.e., an anti-symmetric matrix, in the sense of separability. 
\begin{exm}\label{exm01}
Let $n\ge 3$ be an integer and $A=\frac{1}{2}(\bu\times \bv -\bv\times \bu)$ where $\bu,\bv\in \RR^{n}$ are linearly independent. Then 
\[  A=\frac{1}{2}(\bu\bv^{\top} -\bv \bu^{\top})\in \RR^{n\times n}  \]
is an anti-symmetric matrix with $\rank(A)=2$. Note that $A$ is not invertible since $n\ge 3$.  
\end{exm}
\indent It is not difficult to show by Example \ref{exm01} that an anti-symmetric nonzero matrix $A$ is separable if and only if $\rank(A)=2$, which implies that 
not all anti-symmetric tensors of order 2 (i.e., anti-symmetric matrices) are separable. \\

\begin{exm}\label{exm02}
Let $m=4, n=2$, and choose a permutation $\si=(2341)$, i.e., $\si(1)=2, \si(2)=3, \si(3)=4, \si(4)=1$. An $\si$-invariant tensor $\A\in \T_{4;2}$ satisfies 
\beyy
A_{1122} = A_{1221}=A_{2211} =A_{2112},  &\qquad & A_{1112} = A_{1121}=A_{1211} =A_{2111}, \\
A_{1222} = A_{2221}=A_{2212} =A_{2121}, &\qquad & A_{1212} = A_{2121}.  
\eeyy 
If we denote by $\Ga(\si)$ and $\Psi_{\si}$ respectively for the set of $\si$-invariant tensors and the set of $\be$-symmetric tensors in $\T_{4;2}$ where $\be=(12)$. 
It is easy to see that both $\Ga(\si)$ and $\Psi_{\be}$ are the subspaces of $\T_{4;2}$, with $\dim(\Ga(\si))=6$ and $\dim(\Psi_{\be})=4$.   
\end{exm}

\vskip 10pt

\section{ The invertibility of separable tensors}
\setcounter{equation}{0}

We denote by $\pi(\A)$ for the spectrum of a tensor $\A$. It is known that a real symmetric matrix $A\in \RR^{n\times n}$ is diagonalizable with all eigenvalues 
being real numbers,  and a real anti-symmetric matrix has zero as its unique real eigenvalue. We conjecture that this phenomenon is also true in the tensor case. In the following we first consider the spectrum of separable symmetric tensors and the spectrum of a separable anti-symmetric tensor. We show that \\
\indent  Let $\A\in \T_{m;n}$.  $\A$ is called a \emph{separable symmetric} tensor if there exist some vectors $\bv_{1},\ldots,\bv_{m}$ such that 
\beq\label{eq:sepsymtensor} 
\A=\caS(\bV)=\frac{1}{\sqrt{m!}} \sum\limits_{\si\in \cP_{m}} \bv_{\si(1)}\times \bv_{\si(2)}\times \cdots\times \bv_{\si(m)}. 
\eeq
where $\bV=\bv_{1}\times\bv_{2}\times \ldots\times\bv_{m}$. A separable tensor is either a separable symmetric tensor or a separable anti-symmetric tensor. In the following 
we write $\bv_{1}\diams\bv_{2}\diams\ldots\diams\bv_{m}$ for $\caL(\bV)$ or $\caS(\bV)$.  Now suppose that $1\le m < n$ and $\A$ is a separable tensor, i.e., 
\beq\label{eq:septensor} 
\A= \bv_{1}\diams \bv_{2}\diams \cdots\diams \bv_{m}. 
\eeq
A natural question is: Is $\A$ invertible or not?  The next result tells us that $\A$ is singular (not invertible) if $m<n$.  
\begin{lem}\label{le4-1}
Let $1\le m<n$, $m=2k$ is an even number, and $\A\in \T_{m;n}$ is invertible. Then $0\notin \pi(\A)$. 
\end{lem}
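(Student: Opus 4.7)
My plan is to argue by contrapositive: assuming $0\in \pi(\A)$, I will derive that the NS matrix $A$ of $\A$ (Section~2) is singular, contradicting Corollary~\ref{co:co03}. Unpacking (\ref{eq: defeigvalue}), the hypothesis supplies some $\bu\in \CC^n\setminus\{0\}$ with $\A\bu^{m-1}=0$. A direct index computation, splitting the $m-1=2k-1$ copies of $\bu$ into $k$ copies that ride on the last $k$ modes of $\A$ and $k-1$ copies on modes $2,\ldots,k$, shows
\[
(\A\bu^{m-1})_{i_1}\;=\;\sum_{i_2,\ldots,i_k} u_{i_2}\cdots u_{i_k}\,\mathcal{C}_{i_1 i_2\ldots i_k}\;=\;(\mathcal{C}\,\bu^{k-1})_{i_1},
\]
where $\mathcal{C}:=\A\,\bu^k\in \T_{k;n}$ is the contractive $k$-mode product; under the normal unfolding it corresponds to the vector $A\hat\bu\in \CC^{n^k}$, with $\hat\bu$ the flattening of $\bu^k$. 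Thus the eigenvalue equation is equivalent to the single relation $\mathcal{C}\,\bu^{k-1}=0$.

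Next I would bring in the postulated inverse $\B=\A^{-1}$. Associativity of the S-product (verified by a quick index check) gives $(\B\A)\bu^k=\B(\A\bu^k)=\B\mathcal{C}$. Since $\B\A=\caI$ by Lemma~\ref{le:le01}, and a direct substitution in (\ref{eq:defidentensor}) yields $\caI\,\bu^k=\bu^k$, one obtains $\B\mathcal{C}=\bu^k\neq \caO$ (because $\bu\neq 0$), which forces $\mathcal{C}\neq \caO$.

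The final step, and the main obstacle, is to parlay ``$\mathcal{C}\,\bu^{k-1}=0$ together with $\mathcal{C}\neq \caO$'' into $\det A=0$. For $k=1$ (the matrix case $m=2$) this is instantaneous: $\mathcal{C}\,\bu^{k-1}$ is just $\mathcal{C}$ itself, so the two statements are directly incompatible. For $k\ge 2$ the condition $\mathcal{C}\,\bu^{k-1}=0$ is strictly weaker than $\mathcal{C}=\caO$, and this is where the real work lies. To bridge the gap I plan to perturb: replace $\bu$ by $\bu+t\bw$ with $\bw\in \CC^n$ arbitrary, expand the identity $\A(\bu+t\bw)^{m-1}=0$ as a polynomial in $t$, and exploit the vanishing of every $t$-coefficient. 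Each coefficient is a new weighted identity involving mixed rank-one products $\bu^j\times \bw^{k-j}$; letting $\bw$ range over a basis and using the hypothesis $m<n$ (which leaves at least $n-m\ge 1$ transverse direction in $\CC^n$) should produce enough independent linear relations to either force $A\hat\bu=0$ directly or exhibit an explicit nonzero element of $\ker A$. I expect the combinatorial bookkeeping of these perturbative identities to be the main difficulty; once it is in place, $\det A=0$ follows and the contradiction is complete.
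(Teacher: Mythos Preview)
Your perturbation step is a genuine gap. You write ``expand the identity $\A(\bu+t\bw)^{m-1}=0$ as a polynomial in $t$, and exploit the vanishing of every $t$-coefficient,'' but this is \emph{not} an identity: the eigenvalue hypothesis gives only $\A\bu^{m-1}=0$, i.e.\ the value at $t=0$. There is no reason for $\A(\bu+t\bw)^{m-1}$ to vanish for $t\neq 0$, so you cannot set the higher $t$-coefficients to zero, and the promised ``independent linear relations'' never materialize. The hypothesis $m<n$ that you invoke plays no role here and cannot rescue the argument; indeed the lemma is true without that hypothesis.

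The paper's proof avoids this detour entirely, and in fact you already have the key idea but deploy it in a needlessly weak form. You contract $\B\A=\caI$ against only $\bu^{k}$, obtaining $\B\mathcal{C}=\bu^{k}$ and hence $\mathcal{C}\neq\caO$; this leaves you stranded with the pair ``$\mathcal{C}\bu^{k-1}=0$, $\mathcal{C}\neq\caO$'', which is not a contradiction. The paper instead applies the inverse to the \emph{full} equation: from $\A\bu^{m-1}=0$ one gets
\[
0 \;=\; \B\bigl(\A\bu^{m-1}\bigr) \;=\; (\B\A)\,\bu^{m-1} \;=\; \caI\,\bu^{m-1} \;=\; \bu^{[m-1]},
\]
whence $\bu=0$, contradicting $\bu\neq 0$. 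No NS-matrix machinery, no determinant, no perturbation---the associativity check you already did, used once with all $m-1$ copies of $\bu$, finishes the job in one line.
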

\begin{proof}
If $0\in \pi(\A)$, there is a nonzero vector $\bx\in \RR^{n}$ which is an eigenvector of $\A$ corresponding to $\la=0$, i.e., 
\beq\label{eq: zeroeig}
\A\bx^{m-1}=0. 
\eeq 
Suppose that $\A$ is invertible, then there exists a tensor $\B\in \T_{m;n}$ such that $\A\B=\B\A=\caI$.  By (\ref{eq: zeroeig}) we have 
\[  \bx^{[m-1]} = (\B\A)\bx^{m-1} = \B(\A\bx^{m-1}) = 0. \]
It follows that $\bx=0$, a contradiction with our assumption. Thus $0\notin \pi(\A)$.     
\end{proof}

\begin{thm}\label{th4-2}
Let $1\le m<n$, $m=2k$ is an even number, and $\A\in \T_{m;n}$ be a separable tensor. Then $\A$ is not invertible.
\end{thm}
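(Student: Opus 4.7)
The strategy is to combine Lemma 4.1 with a dimension count: Lemma 4.1 says that if $\A$ is invertible then $0 \notin \pi(\A)$, so it suffices to exhibit a nonzero vector $\bx \in \RR^{n}$ such that $\A \bx^{m-1} = 0$, which places $0$ in the spectrum and forces $\A$ to be non-invertible.

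The construction of $\bx$ is the standard orthogonal-complement argument. Since $\A$ is separable, write $\A = \bv_{1} \diams \bv_{2} \diams \cdots \diams \bv_{m}$ for some vectors $\bv_{1}, \ldots, \bv_{m} \in \RR^{n}$, where $\diams$ is either $\vee$ or $\wedge$. The subspace $W := \spann\{\bv_{1}, \bv_{2}, \ldots, \bv_{m}\} \subseteq \RR^{n}$ has dimension at most $m$, and since $m < n$ by hypothesis, the orthogonal complement $W^{\perp}$ has dimension at least $n - m \ge 1$. Pick any nonzero $\bx \in W^{\perp}$, so $\seq{\bx, \bv_{j}} = 0$ for every $j \in [m]$.

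Now I would compute $\A \bx^{m-1}$ using the defining formulas (\ref{eq: asymt}) and (\ref{eq: sepsymt}). In either the symmetric or the anti-symmetric case, $\A$ is a signed $\frac{1}{\sqrt{m!}}$-weighted sum over $\si \in \cP_{m}$ of rank-one terms $\bv_{\si(1)} \times \bv_{\si(2)} \times \cdots \times \bv_{\si(m)}$. By multilinearity of the contractive mode product, each such term contributes
\beyy
\bigl(\bv_{\si(1)} \times \bv_{\si(2)} \times \cdots \times \bv_{\si(m)}\bigr)\bx^{m-1}
&=& \bv_{\si(1)} \, \seq{\bv_{\si(2)},\bx}\seq{\bv_{\si(3)},\bx}\cdots \seq{\bv_{\si(m)},\bx} \\
&=& 0,
\eeyy
because every one of the $m-1 \ge 1$ inner products vanishes by the choice of $\bx$. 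Summing over $\si$ (with or without the sign $(-1)^{\tau(\si)}$) gives $\A \bx^{m-1} = 0$, so $(0, \bx)$ is an eigenpair of $\A$ and $0 \in \pi(\A)$. By Lemma \ref{le4-1}, $\A$ cannot be invertible.

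There is no real obstacle here; the whole argument runs on multilinearity plus the dimension inequality $m < n$. The only point requiring minor care is verifying that the orthogonality of $\bx$ to each $\bv_{j}$ is enough to kill every summand regardless of which permutation $\si$ shuffles the factors, but this is immediate because at least one $\bv_{\si(j)}$ with $j \ge 2$ (in fact $m-1$ of them) is contracted against $\bx$ in each rank-one term.
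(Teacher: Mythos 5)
Your proof is correct and follows essentially the same route as the paper: pick a nonzero $\bx$ orthogonal to $\spann\{\bv_{1},\ldots,\bv_{m}\}$ (possible since $m<n$), observe that every permuted rank-one summand is annihilated by contraction with $\bx^{m-1}$, conclude $0\in\pi(\A)$, and invoke Lemma \ref{le4-1}. If anything, your version is slightly more complete, since you carry out the computation uniformly for both the symmetric and anti-symmetric cases, whereas the paper's proof only writes out the separable symmetric case explicitly.
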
 

\begin{proof}
First we assume that $\A\in \T_{m;n}$ is separable symmetric. Then $\A=\caS(\bV)=\bv_{1}\vee\bv_{2}\vee\cdots\vee\bv_{m}$ for some $\bv_{j}\in \RR^{n},j\in [m]$, where 
\[ \bV= \bv_{1}\times \bv_{2}\times \cdots\times \bv_{m}. \]
Denote $V=\spann \set{\bv_{1},\bv_{2},\ldots,\bv_{m}}$, i.e., $V$ is the subspace of $\RR^{n}$ spanned by $\bv_{1},\bv_{2},\ldots,\bv_{m}$. If $\bv_{1},\bv_{2}, \ldots,\bv_{m}$ are linearly independent, then $\dim(V)=m<n$.  Given any $\bx\in V^{c}$ where $V^{c}$ is the orthogonal complementary space of $V$, we have 
\beq\label{eq: eq001}  
\A\bx^{m-1} = \caS(\bV)\bx^{m-1} = \caS\left( \prod_{j=2}^{m}(\bv_{j}^{\top}\bx)\bv_{1} \right), 
\eeq
it follows that $\A\bx^{m-1}=\caS(0)=0\in \RR^{n}$ since $\bx\in V^{c}$ is orthogonal to each $\bv_{j}$. Thus $0\in \pi(\A)$. The result is followed by Lemma \ref{le4-1}.     
\end{proof}

\indent Denote by $\bfe_{j}$ the $j$th column vector of the identity matrix $I_{n}$ for $j\in [n]$.  We denote 
\[ \caQ_{n} = \caL(I_{n}):=\caL(\bfe_{1}\times\bfe_{2}\times\cdots \times \bfe_{n}). \]
Then $\caQ=(Q_{i_{1}i_{2}\ldots i_{n}})$ has $n!$ nonzero entries where   
\beq
   Q_{i_{1}i_{2}\ldots i_{n}} =\begin{cases} 1, & \texttt{ if }  (i_{1}, \ldots, i_{n})\in E_{n};\\
                                                                  -1, & \texttt{ if }  (i_{1}, \ldots, i_{n})\in O_{n};\\
                                                                  0,  & \texttt{ otherwise. }     
                                              \end{cases}                    
\eeq
where $E_{n}$ and $O_{n}$ denote respectively the set of even and odd permutations on $[n]$.  For example, $\caQ_{3}$ has six nonzero elements  
\[ Q_{123} = Q_{231} = Q_{312} = -Q_{132} =-Q_{231} =-Q_{321} =1. \]
\indent We call $\caQ_{n}$ the \emph{standard separable anti-symmetric} tensor or \emph{SSAS} tensor.  The following theorem tells us that 
an $n$-order $n$-dimensional real SAS tensor is a scaled SSAS tensor.   
   
\begin{thm}\label{th4-3}
Let $\A\in \T_{n;n}$ be a SAS tensor. Then $\A=\la \caQ$ for some $\la\in \RR$. 
\end{thm}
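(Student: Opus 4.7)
The plan is to express a general SAS tensor $\A=\bv_{1}\wedge\cdots\wedge \bv_{n}$ in the standard basis of $\RR^{n}$ and recognize the coefficient as a determinant, which forces $\A$ to be a scalar multiple of $\caQ_{n}$. If $\bv_{1},\ldots,\bv_{n}$ are linearly dependent, Corollary \ref{co3-3} gives $\A=\caO=0\cdot \caQ_{n}$, so we may assume they form a basis; let $V=[\bv_{1},\bv_{2},\ldots,\bv_{n}]\in \RR^{n\times n}$, so that $\bv_{j}=\sum_{i}V_{ij}\bfe_{i}$ for each $j$.

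Next I would invoke multilinearity of the wedge operation (Lemma \ref{le: le3-4}) to expand
\[
\A=\bv_{1}\wedge\cdots\wedge \bv_{n}=\sum_{i_{1},\ldots,i_{n}=1}^{n} V_{i_{1},1}V_{i_{2},2}\cdots V_{i_{n},n}\,\bfe_{i_{1}}\wedge \bfe_{i_{2}}\wedge\cdots\wedge \bfe_{i_{n}}.
\]
Terms with a repeated index $i_{j}=i_{k}$ ($j\neq k$) produce a wedge of linearly dependent vectors and therefore vanish by Corollary \ref{co3-3}. Hence the sum collapses to one over permutations $\sigma\in \cP_{n}$:
\[
\A=\sum_{\sigma\in \cP_{n}} V_{\sigma(1),1}V_{\sigma(2),2}\cdots V_{\sigma(n),n}\,\bfe_{\sigma(1)}\wedge\cdots\wedge \bfe_{\sigma(n)}.
\]

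The one genuine step is to verify the identity $\bfe_{\sigma(1)}\wedge\cdots\wedge \bfe_{\sigma(n)} =(-1)^{\tau(\sigma)}\caQ_{n}$. I would prove the more general fact that permuting the arguments of the wedge by $\sigma$ multiplies the result by $(-1)^{\tau(\sigma)}$, which mirrors the computation in Theorem \ref{th: th3-5}: starting from the defining sum $\caL(\bfe_{\sigma(1)}\times\cdots\times \bfe_{\sigma(n)})=\frac{1}{\sqrt{n!}}\sum_{\theta}(-1)^{\tau(\theta)}\bfe_{\sigma\theta(1)}\times\cdots\times \bfe_{\sigma\theta(n)}$, change variables $\kappa=\sigma\theta$ and use $(-1)^{\tau(\theta)}=(-1)^{\tau(\sigma)}(-1)^{\tau(\kappa)}$ together with $\sigma\cP_{n}=\cP_{n}$ to pull the sign out.

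Combining the two displays and recognizing the signed sum over $\sigma$ as the Leibniz formula for the determinant of $V$, we obtain
\[
\A=\Bigl(\sum_{\sigma\in \cP_{n}}(-1)^{\tau(\sigma)}\prod_{j=1}^{n}V_{\sigma(j),j}\Bigr)\caQ_{n}=\det(V)\,\caQ_{n},
\]
so the conclusion holds with $\la=\det(V)\in \RR$. The only subtle point is the sign-permutation identity for $\wedge$, but it is a direct analogue of Theorem \ref{th: th3-5}; everything else is multilinearity, the vanishing criterion from Corollary \ref{co3-3}, and the Leibniz formula, so I do not expect any genuine obstacle.
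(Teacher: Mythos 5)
Your proposal is correct and follows essentially the same route as the paper: expand $\A=\bv_{1}\wedge\cdots\wedge\bv_{n}$ in the standard basis by multilinearity, discard repeated-index terms, use the sign identity $\bfe_{\sigma(1)}\wedge\cdots\wedge\bfe_{\sigma(n)}=(-1)^{\tau(\sigma)}\caQ_{n}$, and recognize the Leibniz formula to conclude $\A=\det(V)\,\caQ_{n}$. Your write-up is in fact slightly more careful than the paper's, since you explicitly justify the vanishing of repeated-index terms and the sign-permutation identity that the paper uses implicitly.
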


\begin{proof}
Since $\A$ is a SAS tensor, we may assume that $\A=\caL(A_{1} \times A_{2}\times \cdots \times A_{n})$ where $A=(a_{ij})=[A_{1},\ldots,A_{n}]\in \RR^{n\times n}$. 
Thus we have 
\beyy
 \A  &=& \caL(\sum\limits_{i=1}^{n}a_{i1}\bfe_{i},\sum\limits_{i=1}^{n}a_{i2}\bfe_{i},\cdots, \sum\limits_{i=1}^{n}a_{in}\bfe_{i}) \\
      &=& \sum\limits_{i_{1},i_{2},\ldots,i_{n}} a_{i_{1},1}a_{i_{2},2}\cdots a_{i_{n},n} \caL(\bfe_{i_{1}},\bfe_{i_{2}}, \cdots, \bfe_{i_{n}}) \\
      &=& \caQ \sum\limits_{\si\in \cP_{n}} (-1)^{\tau(\si)} a_{i_{1},1}a_{i_{2},2}\cdots a_{i_{n},n} = \det(A) \caQ
\eeyy
Thus the result holds with $\la=\det(A)$.
\end{proof}

\indent  The following theorem shows that each $3\times 3\times 3$ anti-symmetric tensor is separable.
\begin{thm}\label{th4-4}
Let $\A=(A_{ijk})\in \RR^{3\times 3\times 3}$ be anti-symmetric. Then $\A$ must be separable.
\end{thm}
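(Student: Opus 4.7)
The plan is to exploit the combinatorial rigidity of anti-symmetry in the critical case $m=n=3$: show that the subspace of anti-symmetric tensors inside $\RR^{3\times 3\times 3}$ is one-dimensional, and that its single basis direction is already the SSAS tensor $\caQ_3$, so every element is a scalar multiple of it and thus separable by multilinearity of $\wedge$.

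First I would argue that any anti-symmetric $\A = (A_{ijk})$ vanishes on any triple of indices with a repetition. Applying $\si$-sign-symmetry with the transposition $\si = (12)$ gives $A_{iik} = -A_{iik}$, and similarly for $(23)$ and $(13)$. So the only possibly nonzero entries $A_{ijk}$ sit at positions where $(i,j,k)$ is a permutation of $(1,2,3)$, of which there are exactly $3!=6$. Applying the anti-symmetry relation to each such position, one gets $A_{\si(1)\si(2)\si(3)} = (-1)^{\tau(\si)} A_{123}$ for every $\si\in \cP_3$. Hence $\A$ is determined by the single scalar $\la := A_{123}$, and comparing entries with the definition of $\caQ_3$ yields $\A = \la\, \caQ_3$.

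Next I would invoke the separability of $\caQ_3$ itself. By definition, $\caQ_3 = \caL(\bfe_1 \times \bfe_2 \times \bfe_3) = \bfe_1 \wedge \bfe_2 \wedge \bfe_3$, so $\caQ_3$ is a separable anti-symmetric tensor. Multilinearity of the wedge (Lemma \ref{le: le3-4}(2)) then gives
\[
\A = \la\, \caQ_3 = \la\, (\bfe_1 \wedge \bfe_2 \wedge \bfe_3) = (\la \bfe_1) \wedge \bfe_2 \wedge \bfe_3,
\]
which is exactly the form $\bv_1 \wedge \bv_2 \wedge \bv_3$ required for separability.

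There is essentially no obstacle here: the only ingredient is that the space of anti-symmetric $m$-tensors in dimension $n$ has dimension $\binom{n}{m}$, which for $m=n=3$ collapses to $1$. The argument would fail as soon as $n>m$ (as Example \ref{exm01} already shows for $m=2$), so the proof crucially uses the coincidence $m=n=3$. One small bookkeeping care point is to double-check the sign conventions so that $\A = \la\caQ_3$ (with the correct $\la$), but this is immediate from the definition of $\caQ_n$ in terms of $E_n$ and $O_n$.
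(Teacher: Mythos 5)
Your proposal is correct and follows essentially the same route as the paper's proof: both arguments observe that anti-symmetry in the case $m=n=3$ forces every entry with a repeated index to vanish and pins down the six remaining entries up to a single scalar, so that $\A=\la\,\caQ_{3}=\la(\bfe_{1}\wedge\bfe_{2}\wedge\bfe_{3})$. Your way of absorbing the scalar into one vector via multilinearity is marginally cleaner than the paper's device of scaling all three basis vectors by $a^{1/3}$ (which prompts the unnecessary assumption $a>0$), but the substance of the two arguments is identical.
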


\begin{proof} 
 If $\A=\caO$, then the statement is true. We suppose that $\A$ is a nonzero tensor. By definition of an anti-symmetric tensor, we know that $\A$ satisfies 
 \[ A_{123} =A_{231} =A_{312} =-A_{132} =-A_{231} =-A_{321}. \]
 and that all other entries shall be zero since the repetitions allowed in their subscripts. Therefore we may assume that   
\beq\label{eq: 333antisym01}  
A_{123} =A_{231} =A_{312} =a, \quad   A_{132} =A_{231} =A_{321} =-a.  
\eeq
where $a\in \RR$. We may assume w.l.g. that $a>0$.  Let $A=(a_{ij})=[\al_{1},\al_{2},\al_{3}]=a^{1/3}I_{3}$ be the scalar matrix of size $3\times 3$.  Write 
$\caE=(E_{ijk})=\al_{1}\times \al_{2}\times \al_{3}$.  Then $\caE$ is a 
rank-one tensor whose unique nonzero entry is $E_{123} = a_{11}a_{22} a_{33}=a$.  Since $\caL(\caE)$ is an anti-symmetric tensor by Theorem \ref{th: th3-5}, its nonzero
entries coincide with that in (\ref{eq: 333antisym01}).  Consequently we have $\A=\caL(\caE)$. This shows that $\A$ is a separable anti-symmetric tensor. 
\end{proof}

\indent  We shall mention that for $n\ge 4$ not all (anti-)symmetric tensors in $\T_{3;n}$ are separable. The following example can be used to illustrate this point.
 
\begin{exm}\label{ex4-5}
Let $\A\in \T_{3;4}$ with its nonzero elements listed as follows:
\bey
  A_{123} =A_{231} =A_{312} =1, &\quad &   A_{132} =A_{213} =A_{321} =-1.\label{t1} \\ 
  A_{124} =A_{241} =A_{412} =2, &\quad &   A_{142} =A_{214} =A_{421} =-2; \label{t2}\\
  A_{134} =A_{341} =A_{413} =3, &\quad &   A_{143} =A_{314} =A_{431} =-3;\label{t3}\\
  A_{234} =A_{342} =A_{423} =1, &\quad &   A_{243} =A_{324} =A_{432} =-1.\label{t4}   
\eey
It is easy to check that this tensor $\A$ is anti-symmetric. We now show that $\A$ is not separable.  Let $\A=\caL(\al\times \be \times \ga)$ for some 
$A=[\al,\be,\ga]\in \RR^{4\times 3}$. Then by Theorem \ref{th4-4} the $3\times 3\times 3$ leading principal sub-tensor $\A_{k}$ (obtained by removing the $k$th layer of each mode) is separable. Furthermore, we have by Theorem \ref{th4-3} $\A_{k}=\caL(\al_{k}\times \be_{k}\times \ga_{k})$ where $\al_{k},\be_{k},\ga_{k}\in \RR^{3}$ are obtained 
respectively by removing the $k$th coordinate of $\al,\be,\ga$.  Thus by the proof of Theorem \ref{th4-4}, we get $A[2:4,:]=I_{3}$ by (\ref{t1}). Similarly, we get 
$A[1:3,:]=I_{3}$ which is conflicted. Thus $\A$ can not be separable.  
\end{exm}

\section{Commutation tensors and the rank of separable tensors}
\setcounter{equation}{0}

\indent  In order to study the rank of a separable tensor, we introduce the definition of the commutation tensors. Recall that a commutation tensor $\K_{p,q}$ is a 4-order 
(0,1)-tensor $\K$ of size $p\times q\times q\times p$, defined as \cite{XHL2020} 
\beq\label{eq:commutationt} 
K_{i_{1}i_{2}i_{3}i_{4}} =1 \iff   i_{1}=i_{4}, i_{2}=i_{3} 
\eeq
It is shown in \cite{XHL2020} that 
\begin{prop}\label{prop5-1}
 For all $\bx\in \RR^{q}, \by\in \RR^{p}$, we have  
\beq\label{eq:prop4commutationt}
\K(\bx\times \by)=\by\times \bx
\eeq 
where the multiplication $\K A$ follows the rule of the contractive product. 
\end{prop}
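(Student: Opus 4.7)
The plan is a direct index-by-index calculation, so the bulk of the work is just pinning down the right contraction convention and then observing that the sparsity pattern of $\K$ collapses the sum to a single term. Since $\K\in \RR^{p\times q\times q\times p}$ has order $4$ and $\bx\times\by\in \RR^{q\times p}$ has order $2$, the S-product rule $m=p+q-2r$ of (\ref{eq: S-product}) forces $r=2$: two modes of $\K$ must be contracted against the two modes of $\bx\times\by$. Matching the mode lengths $p,q,q,p$ against $q,p$ in the natural way, mode $3$ of $\K$ pairs with mode $1$ of $\bx\times\by$ (both of length $q$) and mode $4$ of $\K$ pairs with mode $2$ (both of length $p$), leaving a $p\times q$ output.

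With this convention fixed, I would write, for each $(i_{1},i_{2})\in [p]\times [q]$,
\[ \bigl(\K(\bx\times\by)\bigr)_{i_{1}i_{2}} \;=\; \sum_{j_{3}=1}^{q}\sum_{j_{4}=1}^{p} K_{i_{1}i_{2}j_{3}j_{4}}\, x_{j_{3}}\, y_{j_{4}}. \]
By the defining characterization (\ref{eq:commutationt}), the entry $K_{i_{1}i_{2}j_{3}j_{4}}$ equals $1$ precisely when $j_{4}=i_{1}$ and $j_{3}=i_{2}$, and vanishes otherwise. Hence exactly one term of the double sum survives, giving
\[ \bigl(\K(\bx\times\by)\bigr)_{i_{1}i_{2}} \;=\; x_{i_{2}}\, y_{i_{1}} \;=\; (\by\times\bx)_{i_{1}i_{2}}, \]
which is the claimed identity.

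The main, and essentially only, obstacle is bookkeeping: both $\K$ and $\bx\times\by$ carry modes of mixed lengths $p$ and $q$, so one must decide unambiguously which two modes of $\K$ participate in the contraction before the calculation reduces to a one-liner. Once that pairing is settled, no auxiliary machinery (neither Theorem \ref{th3-1} nor the invertibility results of Section 2) is required; the conclusion follows immediately from the $0/1$ sparsity pattern encoded in (\ref{eq:commutationt}).
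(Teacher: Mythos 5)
Your computation is correct: with the contraction pairing the last two modes of $\K$ against the two modes of $\bx\times\by$, the $0/1$ pattern $K_{i_{1}i_{2}j_{3}j_{4}}=1 \iff j_{4}=i_{1},\, j_{3}=i_{2}$ leaves exactly the term $x_{i_{2}}y_{i_{1}}=(\by\times\bx)_{i_{1}i_{2}}$. The paper itself gives no proof of Proposition \ref{prop5-1} (it is quoted from \cite{XHL2020}), but your argument is precisely the one the paper uses for the more general Proposition \ref{prop5-2}, so there is nothing to add.
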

\indent  The commutation tensor plays a role analog to that of a permutation matrix. Now we extend this definition to a general even order case
\footnote{A general permutation tensor can be defined without the restriction of a constant dimension in the first $m$ modes. Here we simplify it to fit our purpose.}.  
For any positive integer $m>1$ and any given permutation $\si\in \cP_{m}$, we define the permutation tensor $\K^{(\si)}$ as an $2m$-order (0,1)-tensor defined by 
\beq\label{eq: permutationt} 
K^{(\si)}_{i_{1}i_{2}\ldots i_{m},j_{1}j_{2}\ldots j_{m}} =1 \iff   i_{k} = j_{\si(k)}  \forall k\in [m]  
\eeq
\indent  For $m=2$, there are two permutations on set $\set{1,2}$, i.e., 
\begin{description}
\item[(1)]  The identity permutation $\si=(1)(2)$, in which case $\K$ is exactly the identity tensor $\caI$ of order 4. 
\item[(2)]  $\si=(12)$, in which case $\K$ is just the commutation tensor we just mentioned.    
\end{description}

\indent  When $m=3$, there are $3!=6$ permutations on $[3]$. For any permutation $\si\in \set{1,2,3}$, $\K^{(\si)}$ is a 6-order (0,1)-tensor with entry 
\[ K^{(\si)}_{i_{1}i_{2}i_{3}j_{1}j_{2}j_{3}}=1 \quad  \iff \quad  i_{k}=j_{\si(k)}, \forall k=1,2,3. \]
For example, if $\si=(321)$, i.e., $\si(1)=3,\si(3)=2,\si(2)=1$, then $K_{i_{1}i_{2}i_{3} j_{1}j_{2}j_{3}}=1$ iff $i_{1}=j_{3}, i_{2}=j_{1}, i_{3}=j_{2}$. \\
\indent Similar to Proposition \ref{prop5-1}, we have 
\begin{prop}\label{prop5-2}
 Given any permutation $\si\in \cP_{m}$ and a group of vectors $\bu_{1}, \bu_{2}, \ldots, \bu_{m}\in \RR^{n}$, we have  
\beq\label{eq:prop4permt}
\K^{(\si)}(\bu_{1}\times\bu_{2}\times\ldots \times\bu_{m})= \bu_{\si(1)}\times\bu_{\si(2)}\times\ldots \times\bu_{\si(m)}
\eeq 
where the multiplication $\K\A$ follows the rule of the contractive product. 
\end{prop}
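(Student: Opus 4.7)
My proof proposal will be a direct entrywise computation. The plan is to fix an index tuple, unfold the contractive product on the left-hand side, collapse the sum using the support condition built into $\K^{(\si)}$, and then match the resulting scalar product against the right-hand side by relabeling via $\si^{-1}$.

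First I would fix $(i_1,i_2,\ldots,i_m)\in S(m;n)$ and write
\[
[\K^{(\si)}(\bu_1\times\cdots\times\bu_m)]_{i_1\ldots i_m} \;=\; \sum_{j_1,\ldots,j_m} K^{(\si)}_{i_1\ldots i_m\, j_1\ldots j_m}\, u_{1,j_1}u_{2,j_2}\cdots u_{m,j_m},
\]
where $u_{k,j_k}$ denotes the $j_k$-th coordinate of $\bu_k$. By the defining property (\ref{eq: permutationt}), the coefficient $K^{(\si)}_{i_1\ldots i_m\, j_1\ldots j_m}$ vanishes unless $i_k = j_{\si(k)}$ for every $k\in[m]$. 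Setting $l=\si(k)$ (hence $k=\si^{-1}(l)$), this condition is equivalent to $j_l = i_{\si^{-1}(l)}$ for every $l\in[m]$, so exactly one term survives and the sum collapses to
\[
[\K^{(\si)}(\bu_1\times\cdots\times\bu_m)]_{i_1\ldots i_m} \;=\; \prod_{k=1}^{m} u_{k,\,i_{\si^{-1}(k)}}.
\]

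Next I would expand the right-hand side directly from the definition of the outer product,
\[
[\bu_{\si(1)}\times\cdots\times\bu_{\si(m)}]_{i_1\ldots i_m} \;=\; \prod_{l=1}^{m} u_{\si(l),\,i_l},
\]
and reindex via the bijection $k=\si(l)$ on $[m]$; the product rewrites as $\prod_{k=1}^{m} u_{k,\,i_{\si^{-1}(k)}}$, matching the expression for the left-hand side. Since the entries coincide for every index tuple, the two tensors are equal.

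The main obstacle, which is quite mild, is purely one of index bookkeeping: one must carefully distinguish the support condition $i_k = j_{\si(k)}$ defining $\K^{(\si)}$ from the permuted ordering $\bu_{\si(1)},\ldots,\bu_{\si(m)}$ on the right-hand side, and keep track of when $\si$ versus $\si^{-1}$ appears on the coordinate subscript of $\bu$ versus on the $\bu$-slot index. The key observation is that scalar multiplication is commutative, so the two products above — one naturally indexed by the $\bu$-slot $k$, the other naturally indexed by the coordinate position $l$ — are the same product viewed from the two sides of the bijection $\si$.
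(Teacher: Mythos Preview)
Your proof is correct and follows essentially the same approach as the paper's own proof: a direct entrywise computation in which the contractive product is expanded, the sum collapses to a single term via the support condition in the definition of $\K^{(\si)}$, and the resulting product of coordinates is matched against the right-hand side. Your version is slightly more explicit about the bijection $k\leftrightarrow\si(k)$ and the appearance of $\si^{-1}$, but the argument is the same.
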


\begin{proof}
We denote the tensor of the left hand side and of the right hand side of (\ref{eq:prop4permt}) respectively by $\A$ and $\B$ and write $\bu_{j}=(u_{1j},u_{2j}, \ldots, u_{nj})^{\top}$. Then both $\A,\B\in \T_{m;n}$.  Given $(i_{1},i_{2},\ldots,i_{m})\in S(m;n)$, we have 
\beyy
    A_{i_{1}i_{2}\ldots i_{m}} &=& \sum\limits_{j_{1},j_{2},\ldots,j_{m}} K_{i_{1}i_{2}\ldots i_{m},j_{1}j_{2}\ldots j_{m}} u_{j_{1}1}u_{j_{2}2}\cdots u_{j_{m}m}\\
                                           &=&  u_{i_{1} \si(1)}u_{i_{2} \si(2)}\cdots u_{i_{m} \si(m)}\\
                                           &=& (\bu_{\si(1)}\times\bu_{\si(2)}\times\ldots \times\bu_{\si(m)})_{i_{1}i_{2}\ldots i_{m}}\\
                                           &=& B_{i_{1}i_{2}\ldots i_{m}} 
 \eeyy
Thus $\A=\B$. The proof is completed.   
\end{proof}

\indent Now we are ready to prove 

\begin{thm}\label{th5-3}
Let $\al_{1},\al_{2},\ldots,\al_{m}\in \RR^{n}$ ($1\le m\le n$).  Then the $m!$ vectors in set  
\beq\label{eq: linindept}  
\set{ \al_{\si(1)}\times \al_{\si(2)}\times \ldots \times\al_{\si(m)}:  \si\in \cP_{m} }  
\eeq
are linearly independent if and only if vectors $\al_{1},\al_{2},\ldots,\al_{m}$ are linearly independent.  
\end{thm}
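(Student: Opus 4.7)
The plan is to prove the two implications separately, using a dual-basis contraction for the nontrivial ``linearly independent $\Rightarrow$ linearly independent'' direction and reading off the converse from Corollary \ref{co3-3}.

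For the forward direction, suppose $\al_{1}, \ldots, \al_{m} \in \RR^{n}$ are linearly independent. Since $m \le n$, I would extend them to a basis $\al_{1}, \ldots, \al_{n}$ of $\RR^{n}$ and let $\be_{1}, \ldots, \be_{n}$ be the associated dual basis, satisfying $\seq{\al_{i}, \be_{j}} = \delta_{ij}$. Starting from a hypothetical relation
\[ \sum_{\si \in \cP_{m}} c_{\si} \, \al_{\si(1)} \times \al_{\si(2)} \times \cdots \times \al_{\si(m)} = \caO, \]
the key idea is to take the Frobenius inner product of both sides with the rank-one tensor $\be_{\pi(1)} \times \be_{\pi(2)} \times \cdots \times \be_{\pi(m)}$ for an arbitrary target permutation $\pi \in \cP_{m}$. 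Because the Frobenius pairing of two rank-one tensors factors mode-wise into a product of vector inner products, each summand collapses to $c_{\si} \prod_{i=1}^{m} \seq{\al_{\si(i)}, \be_{\pi(i)}} = c_{\si} \prod_{i=1}^{m} \delta_{\si(i),\pi(i)}$, which equals $c_{\si}$ when $\si = \pi$ and vanishes otherwise. The entire sum therefore reduces to $c_{\pi} = 0$, and since $\pi$ was arbitrary, every coefficient is zero.

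For the converse, assume $\al_{1}, \ldots, \al_{m}$ are linearly dependent. Corollary \ref{co3-3} gives $\al_{1} \wedge \al_{2} \wedge \cdots \wedge \al_{m} = \caO$ at once, and unfolding the definition (\ref{eq: asymt}) of the wedge yields
\[ \frac{1}{\sqrt{m!}} \sum_{\si \in \cP_{m}} (-1)^{\tau(\si)} \, \al_{\si(1)} \times \al_{\si(2)} \times \cdots \times \al_{\si(m)} = \caO. \]
Since every coefficient $(-1)^{\tau(\si)}$ equals $\pm 1$, this is a non-trivial linear combination of the $m!$ tensors in (\ref{eq: linindept}), which is exactly what linear dependence requires.

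The only auxiliary fact the argument rests on is the rank-one Frobenius identity $\seq{\bu_{1} \times \cdots \times \bu_{m}, \bv_{1} \times \cdots \times \bv_{m}} = \prod_{i=1}^{m} \seq{\bu_{i}, \bv_{i}}$, which follows directly from $(\bu_{1} \times \cdots \times \bu_{m})_{i_{1}\ldots i_{m}} = (\bu_{1})_{i_{1}} \cdots (\bu_{m})_{i_{m}}$ and the factorisation of the resulting multi-index sum into $m$ independent one-index sums. I do not expect any real obstacle; the dual-basis bookkeeping in the forward direction is the only step requiring genuine attention, and it is essentially the standard proof that $\al_{\si(1)} \otimes \cdots \otimes \al_{\si(m)}$ form a basis of the image of $(\spann\{\al_{1},\ldots,\al_{m}\})^{\otimes m}$ under the symmetric group action.
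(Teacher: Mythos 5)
Your proposal is correct, and for the substantive direction it takes a genuinely different route from the paper. Both proofs handle the ``dependent $\Rightarrow$ dependent'' direction identically, by invoking Corollary \ref{co3-3} to get $\al_{1}\wedge\cdots\wedge\al_{m}=\caO$ and reading this as a non-trivial vanishing combination with coefficients $\pm 1/\sqrt{m!}$. For the harder direction, however, the paper rewrites a hypothetical relation $\sum_{\si}\la_{\si}\,\al_{\si(1)}\times\cdots\times\al_{\si(m)}=\caO$ as $\bigl(\sum_{\si}\la_{\si}\K^{(\si)}\bigr)\al_{1}\times\cdots\times\al_{m}=\caO$ using the permutation tensors of Proposition \ref{prop5-2}, asserts that this forces $\sum_{\si}\la_{\si}\K^{(\si)}=\caO$, and then concludes from the linear independence of the $\K^{(\si)}$ themselves; note that the step from ``the operator annihilates the particular tensor $\al_{1}\times\cdots\times\al_{m}$'' to ``the operator is zero'' is exactly where the hypothesis on the $\al_{j}$ must enter, and the paper leaves that inference implicit. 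Your dual-basis contraction avoids this entirely: extending $\al_{1},\ldots,\al_{m}$ to a basis, pairing against $\be_{\pi(1)}\times\cdots\times\be_{\pi(m)}$ via the multiplicative Frobenius identity $\seq{\bu_{1}\times\cdots\times\bu_{m},\bv_{1}\times\cdots\times\bv_{m}}=\prod_{i}\seq{\bu_{i},\bv_{i}}$ isolates $c_{\pi}$ directly, and the argument is complete and elementary. What the paper's route buys in exchange is the machinery of the permutation tensors $\K^{(\si)}$, which it develops for independent use in that section; what yours buys is a shorter, fully self-contained proof whose only ingredient is the rank-one factorisation of the inner product.
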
 

\begin{proof} 
For sufficiency, we assume that the vectors in set (\ref{eq: linindept}) are linear independent, and we want to prove that vectors $\set{\al_{j}}_{j=1}^{m}$ are 
linear independent. Suppose, to the contrary, that vectors $\set{\al_{j}}_{j=1}^{m}$ are linear dependent. Then by Corollary \ref{co3-2} we have 
$\al_{1}\wedge \ldots \wedge \al_{m}=\caO$, which implies that the vectors in set (\ref{eq: linindept}) are linear dependent, a contradiction to our hypothesis.\\
\indent Now we show the necessity. We assume that vectors $\al_{1},\al_{2},\ldots,\al_{m}$ are linearly independent. To show the linear dependency of vectors in set 
(\ref{eq: linindept}), we let 
\beq\label{eq: linindept2}  
\sum\limits_{\si\in \cP_{m}} \la_{\si} \al_{\si(1)}\otimes \al_{\si(2)}\otimes \ldots \otimes\al_{\si(m)} = \caO   
\eeq
where $\caO$ denotes the zero tensor in $\T_{m;n}$ and $\la_{\si}\in \R$ is a scalar. It suffices to show that $\la_{\si}=0$ for each $\si\in \cP_{m}$.  
By Proposition \ref{prop5-2},  (\ref{eq: linindept2}) is equivalent to  
\beq\label{eq: linindept3}  
\caO = (\sum\limits_{\si\in \cP_{m}} \la_{\si} \K^{(\si)} ) \al_{1}\otimes \al_{2}\otimes \ldots \otimes\al_{m}   
\eeq
which implies that 
\beq\label{eq:linindept4}
\sum\limits_{\si\in \cP_{m}} \la_{\si} \K^{(\si)} =\caO
\eeq
It is easy to see that the tensors in the set $S:=\set{\K^{(\si)}\colon \si\in \cP_{m}}$ are linearly independent. In fact, this assertion can be easily confirmed if we consider
the set of matrices $A^{\si}$ where each $A^{\si}\in \RR^{n^{m}\times n^{m}}$ is obtained by matricization of tensor $\K^{\si}$ in set $S$, i.e., 
$A^{\si}=(A_{ij})$ with $A_{ij}=K_{i_{1}i_{2}\ldots i_{m},j_{1}j_{2}\ldots j_{m}}$ where 
\beq\label{eq: multiindex2matrx}
 i = i_{m}+ \sum\limits_{r=1}^{m-1} (i_{r}-1)n^{k-r},  \quad  j = j_{m} + \sum\limits_{r=1}^{m-1} (j_{r}-1)n^{k-r}.   
\eeq   
Thus $\sum\limits_{\si\in \cP_{m}} \la_{\si} \K^{(\si)}=\caO$  implies that $\la_{\si}=0$ for each $\si\in \cP_{m}$.  Thus the proof is completed.  
\end{proof}

\indent  We note that Theorem \ref{th5-3} is also true if the tensor-products in set (\ref{eq: linindept}) are replaced by the Kronecker products. From Theorem \ref{th5-3},
we immediately get 

\begin{cor}\label{cor5-4}
Let $\al_{1}, \al_{2}, \ldots, \al_{m}\in \RR^{n}$ and $\A=\al_{1}\wedge \al_{2}\wedge\ldots \wedge\al_{m}$.  Then $\rank(\A)\le m!$. Furthermore,  
$\rank(\A)=m!$ if the vectors $\al_{1}, \al_{2}, \ldots, \al_{m}$ are linearly independent.  
\end{cor}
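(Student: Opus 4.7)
My plan is to split the corollary into its two assertions: the upper bound follows by inspection of the definition of $\A$, while the lower bound requires a reduction to a rank computation for the standard separable anti-symmetric tensor $\caQ_m$.

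For $\rank(\A)\le m!$, I would simply unpack
\[
\A = \caL(\al_1\times\cdots\times\al_m) = \frac{1}{\sqrt{m!}}\sum_{\sigma\in\cP_m}(-1)^{\tau(\sigma)}\,\al_{\sigma(1)}\times\al_{\sigma(2)}\times\cdots\times\al_{\sigma(m)}.
\]
After absorbing each scalar $\pm 1/\sqrt{m!}$ into one vector factor, this exhibits $\A$ as a sum of $m!$ rank-$1$ tensors, and the conclusion is immediate from the definition of CP rank.

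For the lower bound under the linear independence hypothesis, my first move is to apply Theorem \ref{th5-3}: the $m!$ tensors $\set{\al_{\sigma(1)}\times\cdots\times\al_{\sigma(m)}:\sigma\in\cP_m}$ are linearly independent in $\T_{m;n}$ and span an $m!$-dimensional subspace $W$ containing $\A$, with each of the $m!$ coordinates of $\A$ in this basis equal to $\pm 1/\sqrt{m!}\ne 0$. This already rules out any shorter decomposition whose rank-$1$ factors are drawn from $\set{\al_{\sigma(1)}\times\cdots\times\al_{\sigma(m)}}$.

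The genuine difficulty is extending this observation to \emph{every} rank-$1$ decomposition, including ones whose factors lie outside $V:=\spann\set{\al_1,\ldots,\al_m}$. I would handle this by projection: since $\A\in V^{\otimes m}$, the orthogonal projection $P^{\otimes m}\colon \T_{m;n}\to V^{\otimes m}$ fixes $\A$ while sending any rank-$1$ tensor to a rank-$\le 1$ tensor, so every factor in a candidate decomposition may be assumed to lie in $V$. The change of basis $\al_j\mapsto \bfe_j$ then reduces the problem to proving $\rank(\caQ_m)=m!$ in $\T_{m;m}$, which is the main obstacle. I would attempt this by expanding a hypothetical decomposition $\caQ_m=\sum_{k=1}^r \bx_k^{(1)}\times\cdots\times \bx_k^{(m)}$ in the monomial basis of $\T_{m;m}$ and using the anti-symmetry of $\caQ_m$ to enforce cancellation of every monomial whose multi-index has a repeated entry; the resulting polynomial constraints on the $\bx_k^{(j)}$ should be infeasible for $r<m!$, plausibly via a Kruskal-style uniqueness argument adapted to the anti-symmetric setting. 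I expect this final step to be substantially harder than Theorem \ref{th5-3} itself, since mode-$k$ flattening alone yields only $\rank(\A)\ge m$, not $m!$.
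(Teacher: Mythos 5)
Your upper-bound argument is fine and matches what the paper intends, and your diagnosis of the lower bound is exactly right: the paper offers no proof beyond ``From Theorem \ref{th5-3} we immediately get\dots'', i.e.\ it silently uses the inference ``\,$\A$ is a sum of $m!$ linearly independent rank-one tensors, hence $\rank(\A)=m!$\,'', which is a non sequitur, since linear independence of the summands of one particular decomposition says nothing about decompositions whose rank-one factors lie outside the set $\set{\al_{\si(1)}\times\cdots\times\al_{\si(m)}}$. Your projection step ($P^{\otimes m}$ fixes $\A$ and maps rank-one tensors to tensors of rank at most one) and the change of basis reducing everything to $\rank(\caQ_{m})$ in $\T_{m;m}$ are both sound. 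But the proposal then stops exactly where the real content would have to begin: the claim $\rank(\caQ_{m})=m!$ is left as an expectation (``should be infeasible for $r<m!$, plausibly via a Kruskal-style argument''). As written this is a correct reduction, not a proof.

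The deeper problem is that this last step cannot be completed, because the equality $\rank(\A)=m!$ is false for $m\ge 3$. Already for $m=n=3$: every real $3\times3\times3$ tensor has rank at most $5$ (Kruskal's theorem; see Bremner and Hu, \emph{On Kruskal's theorem that every $3\times3\times3$ array has rank at most 5}, Linear Algebra Appl.\ 2013), so $\rank(\bfe_{1}\wedge\bfe_{2}\wedge\bfe_{3})\le 5<6=3!$, contradicting the corollary (and the abstract's claim that this class attains rank $6$). The symmetric analogue fails even more explicitly: $\caS(\bfe_{1}\times\bfe_{2}\times\bfe_{3})$ is, up to scaling, the symmetric tensor of the monomial $x_{1}x_{2}x_{3}$, and the identity $24\,x_{1}x_{2}x_{3}=(x_{1}+x_{2}+x_{3})^{3}-(x_{1}+x_{2}-x_{3})^{3}-(x_{1}-x_{2}+x_{3})^{3}+(x_{1}-x_{2}-x_{3})^{3}$ exhibits it as a sum of $4$ rank-one tensors, even though its six canonical summands are linearly independent by Theorem \ref{th5-3}. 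So the heuristic your first paragraph rightly refuses to rely on is genuinely wrong, and no refinement of your reduction will recover $\rank(\A)=m!$; only $\rank(\A)\le m!$ (and the flattening bound $\rank(\A)\ge m$ you mention) can be salvaged.
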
 

\indent Corollary \ref{cor5-4} shows that the rank of a tensor can be very large even though its dimension $n$ is small, a fact which is not conformal to the case when $m=2$.
We know that the rank of an $n\times n$ matrix $A$ satisfies $\rank(A)\le n$.  But an 4-order 3-dimensional tensor $\A$ can have rank $4!=24$, which is much bigger than 
3, if we choose $\A$ to be a nonzero separable tensor. Our result can be used to enhance the consequences appeared in \cite{K2009}. \\     
   
\indent  Now we let $\A\in \T_{m;n}$.  We want to know when a symmetric tensor $\A$ can be separable.  In the case $m=2$, we see that $A=\frac{1}{2}(\al\beta^{\top} +\beta\al^{\top})\in \RR^{n\times n}$ is a rank-2 symmetric matrix when $\al,\beta\in \RR^{n}$ are linearly independent.  Furthermore, we can show that 
\begin{lem}\label{le5-5}
Let $\al,\beta\in \RR^{n}$ ($n\ge 2$) be linearly independent, and $A=\frac{1}{2}(\al\beta^{\top} +\beta\al^{\top})$.  Then $A$ is neither positive semidefinite nor negative 
semidefinite.
\end{lem}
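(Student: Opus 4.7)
The plan is to work directly with the quadratic form induced by $A$ and exhibit vectors on which $A$ takes strictly positive and strictly negative values, thereby ruling out both semidefiniteness properties simultaneously.

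First I would compute, for any $\bx\in \RR^{n}$,
\[
 \bx^{\top} A \bx = \tfrac{1}{2}\bx^{\top}(\al\beta^{\top}+\beta\al^{\top})\bx = (\al^{\top}\bx)(\beta^{\top}\bx),
\]
using the fact that $\bx^{\top}\al\beta^{\top}\bx$ and $\bx^{\top}\beta\al^{\top}\bx$ are equal scalars. So the sign of $\bx^{\top}A\bx$ is determined entirely by the signs of the two linear functionals $\al^{\top}\bx$ and $\beta^{\top}\bx$.

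Next I would exploit the linear independence hypothesis. Since $\al,\beta\in\RR^{n}$ are linearly independent, the linear map $T\colon \RR^{n}\to \RR^{2}$ defined by $T(\bx)=(\al^{\top}\bx,\beta^{\top}\bx)$ has rank $2$, hence is surjective. Therefore there exist $\bx,\by\in\RR^{n}$ with $T(\bx)=(1,1)$ and $T(\by)=(1,-1)$. For these choices,
\[
 \bx^{\top}A\bx = 1\cdot 1 = 1 > 0,\qquad \by^{\top}A\by = 1\cdot(-1) = -1 < 0.
\]
The existence of $\bx$ shows $A$ is not negative semidefinite, and the existence of $\by$ shows $A$ is not positive semidefinite, completing the proof.

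There is essentially no serious obstacle here; the only thing to be careful about is invoking linear independence at the right moment (in the surjectivity of $T$), since without it we could not independently prescribe the two scalar values $\al^{\top}\bx$ and $\beta^{\top}\bx$. Equivalently, one could observe that in the $2$-dimensional subspace $\spann\{\al,\beta\}$ the matrix $A$ acts, in the basis $\{\al,\beta\}$, by the Gram-conjugate of $\tfrac{1}{2}\bigl(\begin{smallmatrix}0&1\\1&0\end{smallmatrix}\bigr)$, whose eigenvalues are $\pm\tfrac{1}{2}$; this gives an alternative route via an explicit indefinite signature on $\spann\{\al,\beta\}$, but the quadratic-form argument above is the most direct.
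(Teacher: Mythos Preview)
Your proof is correct and is in fact cleaner than the paper's. The paper normalizes $\norm{\al}=\norm{\be}=1$, sets $a=\seq{\al,\be}$, observes $\abs{a}<1$ by Cauchy--Schwarz, and then constructs by hand a vector $\bu=\al-a(1+2\eta)\be$ (with $\eta=(1-a)/a$) for which $\bu^{\top}A\bu<0$, splitting into the cases $0<a<1$ and $-1<a<0$. Your approach bypasses all of this: having reduced the quadratic form to $\bx^{\top}A\bx=(\al^{\top}\bx)(\be^{\top}\bx)$, you use the surjectivity of $T(\bx)=(\al^{\top}\bx,\be^{\top}\bx)$ onto $\RR^{2}$---an immediate consequence of the linear independence of $\al,\be$---to pick preimages of $(1,1)$ and $(1,-1)$ directly. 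This is case-free, handles $a=0$ without separate treatment (the paper's construction of $\eta$ breaks down there), and makes the role of linear independence transparent. The paper's route is more explicit about the actual vector witnessing indefiniteness, but yours is both shorter and more robust.
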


\begin{proof}
We may assume w.l.g. that $\norm{\al}=\norm{\beta}=1$ where the norm $\norm{\cdot}$ denote the Euclidean norm.  Denote $a=\seq{\al, \beta}$. Then we have 
$\abs{a} <1$ since $\seq{\al, \beta}^{2}\le \seq{\al, \al}\seq{\beta, \beta}=1$ and the equality holds only if $\al =\pm\beta$, which contradicts with the linear 
independency of $\al, \beta$.  We suppose that $0<a<1$,  and denote $\eta = \frac{1-a}{a}$.  Take $\la =-a(1+2\eta)$ and $\bu=\al- a(1+2\eta)\beta$. Then we can 
easily check that $\bu^{\top}A \bu < 0$. When $-1<a<0$, we can also find a vector $\bv$ such that $\bv^{\top}A \bv < 0$. Thus $A$ is not positive semidefinite. 
Similar argument also applies to show that $A$ is not negative semidefinite.    
\end{proof} 
 
\indent We end the paper by conjecturing that the conclusion in Lemma \ref{le5-5} is also valid for a separable symmetric tensors of order $m\ge 2$.
\begin{conj}
Let $\bv_{1},\bv_{2}, \ldots, \bv_{m}\in \RR^{n}$ ($n\ge m$) be linearly independent vectors, and $\A=\bv_{1}\vee \bv_{2}\vee \ldots \vee\bv_{m}$.  Then $\A$ 
is neither a positive semidefinite tensor nor a negative semidefinite tensor.
\end{conj}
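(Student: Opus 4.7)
The plan is to reduce the whole question to a very transparent statement about the associated homogeneous polynomial $f_{\A}(\bx)=\A\bx^{m}$.  First, I would unwind the definition of $\caS$ and observe that, because permutation only reorders the factors in a product of scalars, every summand of the symmetrization contributes the same value when contracted with $\bx^{m}$.  Concretely,
\[
f_{\A}(\bx)=\frac{1}{\sqrt{m!}}\sum_{\si\in\cP_{m}}\prod_{j=1}^{m}\bigl(\bv_{\si(j)}^{\top}\bx\bigr)=\frac{m!}{\sqrt{m!}}\prod_{j=1}^{m}\bigl(\bv_{j}^{\top}\bx\bigr)=\sqrt{m!}\,\prod_{j=1}^{m}\bigl(\bv_{j}^{\top}\bx\bigr).
\]
Thus the question is simply whether the product of linear forms $\prod_{j=1}^{m}(\bv_{j}^{\top}\bx)$ takes both positive and negative values on $\RR^{n}$.

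The second step uses the hypothesis of linear independence.  Since $\bv_{1},\ldots,\bv_{m}\in\RR^{n}$ are linearly independent with $n\ge m$, the linear map $\bx\mapsto (\bv_{1}^{\top}\bx,\ldots,\bv_{m}^{\top}\bx)\in\RR^{m}$ is surjective.  Consequently one can choose $\bx_{+}\in\RR^{n}$ with $\bv_{j}^{\top}\bx_{+}=1$ for all $j\in[m]$, giving $f_{\A}(\bx_{+})=\sqrt{m!}>0$, and one can choose $\bx_{-}\in\RR^{n}$ with $\bv_{1}^{\top}\bx_{-}=-1$ and $\bv_{j}^{\top}\bx_{-}=1$ for $j=2,\ldots,m$, giving $f_{\A}(\bx_{-})=-\sqrt{m!}<0$.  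The existence of $\bx_{\pm}$ with $f_{\A}(\bx_{+})>0>f_{\A}(\bx_{-})$ already shows that $\A$ is neither positive semidefinite nor negative semidefinite when $m$ is even.

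For completeness I would dispose of the odd $m$ case in one line: if $m$ is odd, then $f_{\A}(-\bx_{+})=-f_{\A}(\bx_{+})<0$, so the same polynomial is indefinite (and, as the paper already notes, a nonzero odd-order tensor cannot be psd or nsd anyway).  I do not foresee a real obstacle here; the one substantive point is the identity $f_{\A}(\bx)=\sqrt{m!}\prod_{j}(\bv_{j}^{\top}\bx)$, which is what makes the conjecture essentially trivial once stated.  It is perhaps worth remarking that the conjecture as phrased is in fact \emph{strictly stronger} than indefiniteness: the polynomial $\prod_{j}(\bv_{j}^{\top}\bx)$ vanishes on the union of the $m$ hyperplanes $\bv_{j}^{\top}\bx=0$, so the sign-change can even be localized, which may be useful for generalizations where only a weaker form of linear independence (e.g.\ no $\bv_{j}$ is a nonnegative combination of the others) is assumed.
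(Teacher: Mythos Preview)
The paper does \emph{not} prove this statement; it is explicitly presented as an open conjecture (``We end the paper by conjecturing\ldots''), motivated by the $m=2$ case handled in Lemma~5.5.  So there is no ``paper's own proof'' to compare against.

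Your argument, however, is correct and in fact settles the conjecture.  The key identity
\[
f_{\A}(\bx)\;=\;\sqrt{m!}\,\prod_{j=1}^{m}\bigl(\bv_{j}^{\top}\bx\bigr)
\]
follows directly from the definition (\ref{eq:sepsymtensor}) of $\caS$ together with the commutativity of scalar multiplication, and the surjectivity of $\bx\mapsto (\bv_{1}^{\top}\bx,\ldots,\bv_{m}^{\top}\bx)$ onto $\RR^{m}$ is immediate from the rank of the matrix $[\bv_{1},\ldots,\bv_{m}]$.  The two test vectors $\bx_{\pm}$ then exhibit both signs of $f_{\A}$, which is exactly what is needed.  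Compared with the paper's treatment of the $m=2$ case (Lemma~5.5), your argument is both simpler and more general: Lemma~5.5 normalizes, computes an explicit quadratic form, and chooses a particular direction depending on the sign of $\langle\al,\be\rangle$, whereas your factorization bypasses all of this and works uniformly for every $m$.  Your closing remark about the zero set $\bigcup_{j}\{\bv_{j}^{\top}\bx=0\}$ is also apt and points to the natural weakening of the hypothesis.
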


\section*{Acknowledgement}
The author would like to thank Professor Fuzhen Zhang of Nova Southeastern University for his suggestions and remarks for the proof of Theorem \ref{th5-3}.


\end{document}